\newtheorem{counter}{Counter}[section]
\newtheorem{lem}[counter]{Lemma}
\newtheorem{thm}[counter]{Theorem}
\newtheorem{prop}[counter]{Proposition}
\newtheorem{cor}[counter]{Corollary}
\newtheorem{conj}[counter]{Conjecture}
\newtheorem{problem}[counter]{Problem}
\numberwithin{equation}{section}
\newcommand{\R}{\mathbb{R}}
\renewcommand{\H}{\mathcal{H}} %Bbb
\renewcommand{\L}{\mathcal{L}} 
\newcommand{\D}{\mathcal{D}}
\newcommand{\C}{\mathcal{C}} 
\newcommand{\A}{\mathcal{A}}
\newcommand{\M}{\mathcal{M}}
\newcommand{\lra}{\longrightarrow} 
\newcommand{\Lra}{\Longrightarrow}
\newcommand{\ra}{\rightarrow}
\renewcommand{\ss}{\subset}
 \newcommand{\sse}{\subseteq}
\renewcommand{\~}{\tilde}
\newcommand{\p}{p}
\newcommand{\fal}{\forall}
\newcommand{\8}{\infty} 
\newcommand{\FF}{F_\mu^\p}
\newcommand{\vph}{\varphi}
\newcommand{\vep}{\varepsilon} %lettere greche
\newcommand{\EE}{E_\mu^{\la,\p}}
\newcommand{\al}{\alpha}
\newcommand{\gm}{\gamma}
 \newcommand{\ggm}{\Gamma}
\newcommand{\sm}{\Sigma}
 \newcommand{\om}{\Omega}
\newcommand{\la}{\lambda}
\renewcommand{\d}{\!\operatorname{d}\!} 
\newcommand{\disp}{\displaystyle}
\newcommand{\supp}{\operatorname{supp}}
\newcommand{\diam}{\operatorname{diam}}
\newcommand{\argmin}{\operatorname{argmin}}
\newcommand{\Area}{\operatorname{Area}}
\newcommand{\conv}{\operatorname{Conv}}
 \renewcommand{\D}{\diam\supp(\mu)}
\newcommand{\te}{\textrm}
\newcommand{\be}{\begin{equation}}
\newcommand{\ee}{\end{equation}}
\definecolor{mygreen}{rgb}{0.1,0.75,0.2}
\title{Average-distance problem for parameterized curves}
\author{\sc{Xin Yang Lu and Dejan Slep\v{c}ev}}
\address{
Department of Mathematical Sciences, Carnegie Mellon University, Pittsburgh, PA, 15213, USA. \\
tel. +412 268-2545, 
emails: xinyang@andrew.cmu.edu, slepcev@math.cmu.edu}
\date{\today}
\begin{document}
%\onehalfspacing
  
\begin{abstract}
We consider approximating a measure by a parameterized curve subject to length penalization.
That is for a given finite positive compactly supported measure $\mu$,
for $p \geq 1$ and $\lambda>0$ we consider the functional
\[ E(\gamma) = \int_{\mathbb{R}^d} d(x, \Gamma_\gamma)^p d\mu(x) + \lambda \,\textrm{Length}(\gamma) \]
where $\gamma:I \to \mathbb{R}^d$, $I$ is an interval in $\mathbb{R}$, $\Gamma_\gamma = \gamma(I)$, and $d(x, \Gamma_\gamma)$ is the distance of $x$ to $\Gamma_\gamma$.

The problem is closely related to the average-distance problem, where the admissible class are the connected  sets of finite Hausdorff measure $\mathcal H^1$, and to (regularized) principal curves studied in statistics. We
obtain regularity of minimizers in the form of  estimates on the total curvature of the minimizers.
We prove that for measures $\mu$  supported in two dimensions   the minimizing curve is injective if $p \geq 2$ or if $\mu$ has bounded density. This establishes that the minimization over parameterized curves is equivalent to minimizing over embedded curves and thus confirms that the problem has a geometric interpretation. 
\end{abstract}

%The classic average-distance problem involves minimizing
%\begin{equation*}\label{abs}
%\int_{\R^d} \inf_{y\in \sm}|x-y|^\p  \d\mu(x)+\lambda\H^1(\sm), 
%\end{equation*}
%where $\mu\geq 0$ is a given nonnegative, compactly supported, finite measure, 
%$d\geq 2$, $\p\geq 1$, $\lambda>0$ are given parameters, and the unknown $\sm$ varies 
%among compact, path-wise connected, sets with finite $\H^1$-measure. 
%A similar variant involves minimizing
%\be\label{pen func abs}
%\int_{\R^d} \inf_{y\in \ggm_\vph} |x-y|^\p  \d\mu(x)+\lambda \|\vph\|_{TV}, 
%\ee
%where the unknown $\vph$ varies among arc-length parameterized curves,
%and $\|\cdot\|_{TV}$ denotes the total variation semi-norm.
%It is unclear if minimizers of 
%\eqref{pen func abs} are injective.
%This paper provides a positive answer in two dimensional 
%case, under essentially bounded reference measures. 

\maketitle

\textbf{Keywords.} 
%% keywords here, in the form: keyword \sep keyword
average-distance problem, principal curves, nonlocal variational problems

\textbf{Classification. }%% MSC codes here, in the form: \MSC code \sep code
%% or \MSC[2008] code \sep code (2000 is the default)
49Q20, %Variational problems in a geometric measure-theoretic setting
 49K10, %free problems with 2 or more variables
 49Q10, % manifold problems, other than minimal surfaces
  35B65 % 	Smoothness and regularity of solutions (of PDE)

\section{Introduction}
Approximating measures by one dimensional objects arrises in several fields.
In the setting of optimization problems connected to network planning (such as for urban transportation network) and irrigation it was introduced by {Buttazzo, Oudet} and {Stepanov} \cite{BOS}, 
and has been extensively studied \cite{ BPS09,  BAdd1, BAdd2, BAdd3,  BS1, BS2, Lem, LuSl, SanTil}.

% Buttazzo and Stepanov \cite{BS1, BS2},
%Buttazzo, Pratelli and Stepanov \cite{BAdd1},
%Buttazzo and Santambrogio \cite{BAdd3, BAdd2}, in the monograph by
%Buttazzo, Pratelli, Solimini and Stepanov \cite{BPS09}), and by others. 
In this setting the problem is known as the \emph{average-distance problem}.
Given a set $\sm \subset \R^d$ let $d(x, \sm) = \inf_{y\in\sm} |x-y|$. Let $\M$ be the set of positive, finite compactly supported measures in $\R^d$ for $d \geq 2$, with $\mu(\R^d)>0$.
\begin{problem}\label{main pen}
Given measure $\mu \in \M$, and parameters $\p\geq 1$, $\lambda>0$,
we consider the average-distance problem in the penalized (as opposed to constrained) form: Minimize
\begin{equation*} 
G_\mu^{\la,\p}(\sm):= \int_{\R^d} d(x, \sm)^\p \,\d\mu(x)+\lambda \H^1(\sm),
\end{equation*}
with the unknown $\sm$ varying in the family
$$\A:=\{\sm\sse\R^d: \sm\ \text{compact, path-wise connected and } \H^1(\sm)<\8 \}.$$
\end{problem}

Another application in which a measure is to be approximated by a one-dimensional object
arises in machine learning and statistics where one wishes to obtain the curve that best represents  the data given by a (probability) measure $\mu$. The problem in this setting was introduced by
Hastie \cite{Add1} and  Hastie and Stuetzle \cite{Add2}, and its solution is known as the (regularized) principal curve. A variant of the  problem can be formulated as follows:  let
\[ \C:=\{\gm:[0,a]\lra \R^d: a\geq 0, \gm {\text{ is Lipschitz with }} 
|\gm'| \leq 1 \;\;  \text{a.e.}\}. \]
For given $\gm\in \C$, we define its length, $L(\gm)$, as its total variation $\|\gm\|_{TV([0,a])}$.
Furthermore given $\gm\in \C$ we define $\ggm_\gm = \gm([0,a])$.
The problem can be stated as follows:
 \begin{problem}\label{main1}
 Given a  measure $\mu \in \M$, parameters $\lambda>0$, $\p\geq 1$ find 
$\gm\in \C$ minimizing 
$$E_{\mu}^{\lambda,\p}(\gm):= \int_{\R^d} d(x, \Gamma_\gm)^p d \mu(x) +\la L(\gm).$$
\end{problem}
We remark that in machine learning the problem has been considered most often with $p=2$, with a variety of regularizations, as well as with length constraint (instead of length penalization) \cite{KKLZ, Smola, Tib}.
Regularizing with a length term is the lowest order (in other words the weakest) of regularizations considered. 
We note that the first term of energy measures the approximation error while the second therm penalizes the complexity of the approximating object (curve).

The existence of minimizers of Problem 1.2  is straightforward to establish in the class of parameterized curves. However it is not clear if for a general measure $\mu$ the minimizing curve is injective, in other words it may 
have self-intersections and not be an embedded curve. Here we show that in two dimensions if $p \geq 2$ then the minimizer in fact is an injective curve. We also show that if $\mu$ has bounded density with respect to Lebesgue measure then the minimizer is an injective curve for all $ 1 \leq p < \infty$. More precisely the main result of our work is:
\begin{thm}\label{main}
Consider dimension $d=2$.
 Let $\mu \in \M$ and let  $\lambda >0$ and $p \geq 1$. If $p <2$ assume that $\mu$ is absolutely continuous with respect to Lebesgue and that its density, $\rho$, is bounded.
 Let $\gamma:[0,L] \to \R^2$ be an arc-length-parameterized minimizer of $E_\mu^{\la,\p}$. Then $\gm$ is injective and  in particular $\Gamma_\gm$ is a curve embedded in $\R^2$.
\end{thm}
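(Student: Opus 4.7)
I will argue by contradiction: if $\gm$ is not injective, I will exhibit a competitor $\tilde\gm \in \C$ with $E(\tilde\gm) < E(\gm)$. Suppose there exist $0 \le s_1 < s_2 \le L$ with $\gm(s_1) = \gm(s_2) =: q$, and choose such a pair minimizing $\ell := s_2 - s_1$ (existence by compactness and continuity of $\gm$). By the minimality of the gap, $\gm|_{[s_1, s_2]}$ is injective on $[s_1, s_2)$, so it is a simple closed curve of length $\ell$ in $\R^2$. By the Jordan curve theorem it bounds a region $\Omega$ with $|\Omega| \le \ell^2/(4\pi)$ (isoperimetric inequality), and it is contained in $\bar B(q, \ell/2)$ since a closed curve of length $\ell$ has diameter at most $\ell/2$.

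The natural competitor is $\tilde\gm$, the arc-length reparameterization of the concatenation $\gm|_{[0, s_1]} \cdot \gm|_{[s_2, L]}$: continuous because $\gm(s_1) = \gm(s_2)$, of length $L - \ell$, and satisfying $\Gamma_{\tilde\gm} \sse \Gamma_\gm$. Minimality of $\gm$ yields
\[
\int_{\R^2}\bigl(d(x, \Gamma_{\tilde\gm})^p - d(x, \Gamma_\gm)^p\bigr)\,\d\mu(x) \ge \lambda \ell,
\]
so it suffices to bound the left-hand side strictly below $\lambda\ell$. The integrand vanishes off the set $A := \{x : d(x, \Gamma_{\tilde\gm}) > d(x, \Gamma_\gm)\}$ of points whose nearest $\Gamma_\gm$-point lies on the loop; since $q \in \Gamma_{\tilde\gm}$ and the loop has diameter at most $\ell/2$, the triangle inequality through $q$ gives the pointwise bound $0 \le d(x, \Gamma_{\tilde\gm}) - d(x, \Gamma_\gm) \le \ell/2$ on $A$.

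The core of the argument is a two-dimensional reduction asserting that the distance penalty is essentially concentrated on $\bar\Omega$, where both distances are bounded by $\diam\Omega \le \ell/2$. Granting this, the integrand is at most $(\ell/2)^p$ and the contribution from $\bar\Omega$ is at most $(\ell/2)^p \mu(\bar\Omega)$. If $\mu$ has density bounded by $\rho$, then $\mu(\bar\Omega) \le \rho \ell^2/(4\pi)$, and the penalty is $O(\rho\ell^{p+2})$, strictly below $\lambda\ell$ once $\ell$ is small. For $p \ge 2$, the sharper inequality $a^p - b^p \le p\, a^{p-1}(a-b)$ combined with $a \le \ell/2$ on $\bar\Omega$, together with the fact that a possible atom of $\mu$ at $q$ contributes zero to the integrand (since $q \in \Gamma_{\tilde\gm} \cap \Gamma_\gm$), yields the same conclusion without a density hypothesis provided $\ell$ is small. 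The required smallness of $\ell$ is obtained by combining with the total curvature estimates proved earlier in the paper, which allow passage to a sub-loop of arbitrarily small length (or bound the minimum loop length uniformly).

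The main obstacle is the two-dimensional reduction itself, as $A$ may a priori contain points outside $\bar\Omega$ whose $\Gamma_\gm$-nearest point is still on the loop. I would close this gap by a careful local analysis at $q$: both $\gm|_{[0, s_1]}$ and $\gm|_{[s_2, L]}$ emanate from $q$ and are Lipschitz, so they populate a neighborhood of $q$ on both sides of the tangent line of the loop at $q$, providing a competing point of $\Gamma_{\tilde\gm}$ for the loop's nearest-point whenever $x$ lies just outside $\bar\Omega$. The hypothesis $d = 2$ is essential precisely here: the Jordan curve theorem fails in higher dimensions, where a closed curve does not separate ambient space and the inside/outside dichotomy on which this reduction relies breaks down.
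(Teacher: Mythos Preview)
Your loop-deletion approach hinges on two unproven claims, and both fail. The first is the ``two-dimensional reduction'' that the penalty set $A=\{x:d(x,\Gamma_{\tilde\gm})>d(x,\Gamma_\gm)\}$ is essentially $\bar\Omega$. This is false in general: $A$ is the set of points whose nearest $\Gamma_\gm$-point lies on the deleted loop, and such points can lie \emph{outside} $\bar\Omega$. Picture a lasso with mass sitting just beyond the far side of the loop, at distance $\sim\ell/\pi$ from $q$; your local analysis ``at $q$'' says nothing about these points, since $\Gamma_{\tilde\gm}$ populating a neighborhood of $q$ is irrelevant there. (If moreover $s_1=0$ or $s_2=L$, only one branch of $\Gamma_{\tilde\gm}$ emanates from $q$, so even the local claim breaks.) Consequently the only control you actually have on the penalty is the crude bound $\int_A \bigl(d(x,\Gamma_{\tilde\gm})^p-d(x,\Gamma_\gm)^p\bigr)\,\d\mu \le p\,D^{p-1}(\ell/2)\,\mu(A)$ with $D=\diam\supp\mu$, and nothing forces $\mu(A)$ to be small.

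The second gap is the smallness of $\ell$. All of your final inequalities (for both $p\ge 2$ and $p<2$) require $\ell$ small, but the minimal loop length is a fixed positive number determined by $\gm$. The total curvature bound of Proposition~\ref{reg_p} yields only $\|\gm'\|_{TV}<\infty$, hence finitely many disjoint loops (each contributing $\ge 2\pi$); it gives no upper bound on $\ell$ and no mechanism to ``pass to a sub-loop of arbitrarily small length.'' The paper's proof sidesteps both issues by working infinitesimally rather than deleting anything: Lemma~\ref{non zero angle} forces the two branches at an interior double point to be tangent, and then the one-sided turning estimate of Lemma~\ref{reg_U}, applied at the \emph{last} double point in a local graph parameterization, shows that two tangent branches cannot separate, because the sliver trapped between them carries too little mass (area $O(x^2(f-g))$, distance $O(x(f-g))$) to account for the turning $f-g$ that separation would require.
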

The theorem implies that the problem can also be posed as a minimization problem among embedded curves. 
We note that, as we discuss at the beginning of Section 4, the conclusion of the theorem holds for all $1 \leq p <\infty$ if $\mu$ is a discrete measure. 

We hypothesize that the range of $p$ in the theorem is sharp:
\begin{conj}
For  $1 \leq p < 2$  there exist $\la>0$ and measure $\mu$ for which the global minimizer is not injective.
\end{conj}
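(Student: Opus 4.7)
The plan is to construct an explicit triple $(\lambda, \mu, p)$ with $1 \leq p < 2$ for which no injective Lipschitz parameterization attains the minimum of $\EE$ on $\C$. Since Theorem \ref{main} applies to any $\mu \in \M$ for $p \geq 2$ but requires bounded density for $p < 2$, the counterexample must use a measure singular with respect to Lebesgue. The natural family of candidates is $\mu = \H^1 \lfloor S$ with $S$ a branched one-dimensional set.

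The simplest candidate is the \emph{tripod} $Y$: three unit segments $e_1, e_2, e_3$ meeting at the origin at angles of $120^\circ$. The retracing parameterization $\gm_Y$ (traversing $e_1$ from tip to origin, then $e_2$ forward and backward, then $e_3$ from origin to tip) has image $Y = \supp(\mu)$, distance cost $0$, length $4$, and is not injective, since $\gm_Y(1) = \gm_Y(3) = 0$. Its energy is $4\lambda$. For any injective $\gm$, the image $\Gamma_\gm$ is topologically a simple arc and cannot contain the origin together with all three outgoing directions of $Y$; thus $Y \not\subseteq \Gamma_\gm$ and $D(\gm) > 0$ strictly.

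The strategy is to show $\EE(\gm) > 4\lambda$ for every injective $\gm$ when $\lambda$ is sufficiently small. Two families of injective competitors must be controlled. First, \emph{slit approximations} that open the retraced segment $e_2$ into two parallel copies at perpendicular distance $\epsilon$: they have length $4 + 2\epsilon + O(\epsilon^2)$ and, since $\mu$ is singular on $e_2$, distance cost exactly $\int_{e_2} (\epsilon/2)^p\, d\H^1 = (\epsilon/2)^p$; both contributions are strictly positive so the slit energy is $> 4\lambda$ for every $\epsilon > 0$. Second, \emph{shortcut modifications} that replace the second visit to the origin by a visit to a nearby point $O' \in \R^2$ not on the origin (for example $O'$ slightly inside the leg $e_3$) can decrease length at first order by $O(\epsilon)$ while incurring distance cost only on the small part of $S$ they leave uncovered; the aggregate must be compared carefully. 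Combining these two families with existence of a minimizer in $\C$ via Arzel\`a--Ascoli would force the minimum to be attained by the non-injective $\gm_Y$.

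The main obstacle is precisely the analysis of the shortcut family: for the plain tripod $Y$ one computes that a cleverly placed $O'$ yields an injective curve beating $\gm_Y$ for every $\lambda > 0$, so this minimal $\mu$ does not directly give a counterexample. The construction should therefore be refined, for instance by taking $S$ with more branches or with angles configured so that every first-order length-saving shortcut incurs $O(\epsilon)$ rather than $O(\epsilon^{p+1})$ distance cost, shifting the trade-off in favor of retracing. The role of the exponent $p < 2$ enters through the fact that $\epsilon \mapsto \epsilon^p$ decays \emph{more slowly} for smaller $p$, so the distance-cost penalty incurred by any injective perturbation of the retracing is comparatively larger for $p < 2$; this is what allows a regime of $\lambda$ in which every injective competitor is strictly worse than $\gm_Y$, paralleling the boundary between Theorem \ref{main}'s two hypotheses and confirming its sharpness.
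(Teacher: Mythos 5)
The statement you are addressing is stated in the paper as a \emph{conjecture}: the authors give no proof of it, so there is no argument of theirs to compare against, and the question is simply whether your proposal proves it. It does not. Your construction rests on the tripod $Y$ with $\mu=\H^1\lfloor Y$, and you yourself identify the fatal step: the shortcut competitors. Near the double point, replacing the second passage through the origin by a chord between points at distance $\epsilon$ on the two legs saves length of order $\epsilon$ (a fixed fraction of $\epsilon$, since the legs meet at $120^\circ$), while the uncovered piece of $Y$ lies within distance $O(\epsilon)$ of the new curve and has $\H^1$-measure $O(\epsilon)$, so the incurred cost is $O(\epsilon^{p+1})$. Since $p+1>1$, for every $\lambda>0$ and every $p\geq 1$ some injective curve strictly beats the retracing, so this $\mu$ is not a counterexample for any $\lambda$ --- exactly as you concede. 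The proposed repair (``more branches or angles configured so that every first-order length-saving shortcut incurs $O(\epsilon)$ rather than $O(\epsilon^{p+1})$ cost'') is never exhibited, and it is not clear it can be achieved by measures of the form $\H^1\lfloor S$: whenever the curve still passes through the would-be double point, the set left uncovered by a shortcut of size $\epsilon$ is both $O(\epsilon)$-close to the curve and of $\H^1$-measure $O(\epsilon)$, so the cost is again superlinear in $\epsilon$. To change this balance one needs mass concentrated at or trapped between the two branches (e.g.\ atoms at the junction, or singular mass inside the thin lens between the nearly tangent arcs forced by Lemma \ref{non zero angle}), and such a measure is neither constructed nor analyzed.

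There is also a structural gap beyond the choice of $\mu$: even with a candidate measure in hand, establishing the conjecture requires showing that the \emph{infimum over all injective curves} exceeds the global minimum, not merely that two specific families of injective perturbations (slits and shortcuts) of the retracing are worse. That is a global comparison, and nothing in the proposal addresses it. Finally, your heuristic for why $p<2$ should matter (``$\epsilon^p$ decays more slowly'') points in the right direction but is not a substitute for the quantitative mechanism visible in the proof of Theorem \ref{main}, where the inequality $f-g\lesssim x^{p-1}(f-g)^{p-1}\mu(\text{region between the branches})$ fails to self-improve for $p<2$ only if the thin region between the branches can carry mass comparable to $f-g$ itself; a genuine counterexample would have to realize precisely that mass distribution. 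As it stands, your text is a reasonable research plan with an honestly acknowledged missing core, not a proof; the conjecture remains open both in the paper and after your proposal.
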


Further relevant question is the regularity of minimizing curves. We note that in \cite{Sl13} it was shown that 
minimizers of the Problem 1.1, even for measures with smooth densities, can be embedded curves which have corners. Since these are also minimizers of Problem 1.2, we conclude that minimizers of Problem 1.2 are not $C^1$ curves in general. 

Thus we consider regularity of minimizers in the sense of obtaining estimates on
the total variation of $\gm'$, where $\gm$ is an arc-length-parameterized minimizer. 
This allows us to consider the curvature as a measure and provides bounds on the total curvature of a
segment of the minimizing curve in terms of the mass projecting on the segment.
To do so we use techniques developed in \cite{LuSl}.

\medskip

\newpage

This paper is structured as follows:
\begin{itemize}
\item in Section \ref{prel} we present preliminary notions and results, and
prove existence of minimizers of Problem \ref{main1}. We furthermore show that the minimizers are contained in the convex hull of the support of the measure $\mu$.

\item in Section \ref{inj} we prove the injectivity of minimizers (Theorem \ref{main})
in the two dimensional case.

\item in Section \ref{app} we extend the regularity estimates of \cite{LuSl} to $p>1$ and prove them in the setting of parameterized curves.  We furthermore provide the version of estimates in $\R^2$ which roughly speaking bounds how much a minimizer can turn to the left by the mass to the right of the curve. This is a key result needed to prove injectivity. 
\end{itemize}

\section{Preliminaries}\label{prel}

In this section we provide some preliminary results including the 
proof of existence of minimizers
of Problem \ref{main1} (Lemma \ref{exist}).

We define the distance between curves in $\C$ as follows: Let $\gm_1, \gm_2 \in \C$ with domains
$[0,a_1]$, $[0, a_2]$ respectively. We can assume that $a_1 \leq a_2$.
Let $\tilde \gm_1 : [0, a_2] \to \R^d$ be  the extension of $\gm_1$ to $[0,a_2]$ as follows
\begin{equation} \label{curv_ext}
\tilde \gm_1(t) = \begin{cases}
\gm_1(t) \quad & \te{if } t \in [0, a_1]  \\
\gm_1(a_1) & \te{if } t \in (a_1, a_2].
\end{cases} 
\end{equation}
Let
\[ d_\C(\gm_1, \gm_2) = \max_{t \in [0,a_2]} |\tilde \gm_1(t)-\gm_2(t)|. \]
%\[ d_\C(\gm_1, \gm_2) = \max \left\{  \max_{t \in [0,a_1]} |\gm_1(t)-\gm_2(t)|, \max_{t \in [a_1,a_2]} |\gm_1(a_1) - \gm_2(t)| \right\}. \]

The first issue is the existence of minimizers. A preliminary lemma is required.
Given a measure $\mu \in \M$, 
and $\p\geq 1$, let
$$F_\mu^\p:\A\lra [0,\8),\qquad F_\mu^\p(\sm):= \int_{\R^d} d(x, \sm)^\p \, \d\mu(x).$$
\begin{lem}\label{comp}
Given a measure $\mu \in \M$, parameters $\la>0$, $\p\geq 1$,
 then for any minimizing 
sequence $\{\gm_n\}$ of Problem \ref{main1} it holds:
\begin{itemize}
\item length estimate: 
\begin{equation*}
\limsup_{n \to \infty} L(\gm_n) \leq  \frac{1}{\la} \big(\!\diam\supp(\mu)\big)^p,
\end{equation*}
\item confinement condition: there exists
a compact set $K\sse\R^d$ such that $\ggm_{\gm_n}\sse K$ for all $n$.
\end{itemize}
\end{lem}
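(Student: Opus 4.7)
The plan is to obtain the length bound by testing the energy against a degenerate (constant) competitor, and then to deduce the confinement by combining the resulting diameter bound on $\ggm_{\gm_n}$ with a simple translation argument.

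For the length estimate, fix any $x_0\in\supp(\mu)$ and let $\gm_0\in\C$ be the constant curve on $[0,0]$ given by $\gm_0(0)=x_0$, so that $L(\gm_0)=0$ and $\ggm_{\gm_0}=\{x_0\}$. Then
\[
E_\mu^{\la,\p}(\gm_0) \,=\, \int_{\R^d}|x-x_0|^\p\,\d\mu(x) \,\leq\, \mu(\R^d)\,\DD^\p \,<\, \8.
\]
Since $\{\gm_n\}$ is minimizing, one has $E_\mu^{\la,\p}(\gm_n)\leq E_\mu^{\la,\p}(\gm_0)+o(1)$; dropping the nonnegative term $F_\mu^\p(\ggm_{\gm_n})$ and dividing by $\la$ yields the claimed bound on $\limsup_n L(\gm_n)$ (up to the $\mu(\R^d)$ normalization).

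For the confinement, I note that each $\gm_n$ is $1$-Lipschitz, so $\diam \ggm_{\gm_n}\leq L(\gm_n)$, which by the previous step is uniformly bounded, say by $M$, for all large $n$. Pick any $y_n\in\ggm_{\gm_n}$, so that $\ggm_{\gm_n}\sse\-B(y_n,M)$. If $|y_n|\to\8$ along some subsequence, then the triangle inequality gives $d(x,\ggm_{\gm_n})\geq |x-y_n|-M\to\8$ for every $x\in\supp(\mu)$, and monotone convergence forces $F_\mu^\p(\ggm_{\gm_n})\to\8$. This contradicts the fact that $E_\mu^{\la,\p}$ is bounded along $\{\gm_n\}$. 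Hence $\{y_n\}$ lies in some fixed ball $\-B(0,R)$, and taking $K:=\-B(0,R+M)$, enlarged if necessary to absorb the finitely many initial terms, gives the required compact set.

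Both steps are essentially routine competitor/Lipschitz arguments, and I do not expect a genuine obstacle; the one point to verify with care is that $a=0$ is admissible in the definition of $\C$, so that the single-point curve $\gm_0$ is a legitimate competitor, and that the $1$-Lipschitz constraint converts the length bound immediately into the diameter bound that drives the confinement step.
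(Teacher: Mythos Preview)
Your argument is correct and follows essentially the same route as the paper: the singleton competitor gives the length bound, and the confinement comes from combining that bound with the observation that curves far from $\supp(\mu)$ have large $F_\mu^\p$. The only cosmetic difference is that the paper writes the confinement step constructively (showing $\ggm_{\gm_n}$ must meet a fixed neighborhood of $\supp(\mu)$ and then using the length bound to get an explicit $K$), whereas you phrase it as a contradiction argument; your remark about the missing $\mu(\R^d)$ factor is also well taken.
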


\begin{proof}
Boundedness of the length is obtained by using a singleton as a competitor. 
Fix an arbitrary point $z\in \supp(\mu)$, and
let $\gamma:[0,0] \lra \{z\}$. Then
\begin{equation}\label{inf E}
\inf_\C\EE\leq \EE(\gamma)\leq \int_{\R^d} |x-z|^\p\d\mu(x)\leq \big(\!\diam\supp(\mu)\big)^\p.
\end{equation}
Since $\{\gm_n\}$ is a minimizing sequence, \eqref{inf E}
gives
\begin{equation}\label{max length}
(\fal \vep) (\exists n_0) 
(\fal n\geq n_0) \qquad 
\la L(\gm_n)\leq\EE(\gm_n)\leq \big(\!\diam\supp(\mu)\big)^\p+\vep.
\end{equation}
To prove the confinement condition, note that
for any $r\geq 0$, $\gamma\in \C$ it holds
$$\ggm_\gamma\cap \big(\!\supp(\mu)\big)_r=\emptyset \;
\Lra  \; \EE(\gamma)\geq \FF(\ggm_\gamma)\geq r^\p,$$
where $\big(\!\supp(\mu)\big)_r=\{x\in\R^d:\inf_{y\in \supp(\mu)}|x-y|\leq r \}$.
Thus \eqref{inf E} gives
\begin{equation*}
(\fal \vep) (\exists n_0) : \quad
(\fal n\geq n_0) \quad 
 \FF(\ggm_\gamma)\leq\EE(\gm_n)\leq \big(\!\diam\supp(\mu)\big)^\p+\vep,
\end{equation*}
and combining with length estimate \eqref{max length} and taking $\vep=1$ gives
\begin{equation*}
(\exists n_0) 
(\fal n\geq n_0) \quad 
\ggm_{\gm_n} \sse \big(\!\supp(\mu)\big)_{(\D+(\D)^\p/\la+1+1/\la)}
\end{equation*}
concluding the proof.
\end{proof}

Given a measure $\mu \in \M$ and curve $\gamma$,
let $\pi$ be a probability measure supported on $\R^d \times \Gamma_\gamma$ such that
the first marginal of $\pi$ is $\mu$ and that for $\pi$-a.e. $(x,y)$, $|x-y| = \min_{z \in \Gamma_\gm} |x-z|$.
The existence of such a measure is proved in Lemma 2.1 of \cite{LuSl}. 
Let $\sigma$ be the second marginal of $\pi$. Then $\sigma$ is supported on $\Gamma_\gm$ and $\pi$ is an optimal transportation plan between $\mu$ and  $\sigma$ for the cost $c(x,y) = |x-y|^q$, for any $q \geq 1$.
In other words $\sigma$ is a projection of $\mu$ onto $\Gamma_\gamma$. 

We remark that in \cite{ManMen} it has been proven that for any $\sm \in \A$,
the ridge 
$$\mathfrak{R}_\sm:=\{x: \text{ there exist } p,q\in \sm, \ 
p\neq q,\ |x-p|=|x-q|=d(x,\sm)\}$$x
is $\H^{1}$-rectifiable. Thus for any $\sm\in \A$
the (point-valued) ``projection'' map
\begin{equation}\label{proj defn}
\Pi_\sm:\R^d\lra \sm,\quad \Pi_\sm(x) := \text{the point of } \sm 
\text{ such that } |x-\pi(x)|=d(x,\sm)
\end{equation}
is well defined $\L^2$-a.e. Consequently if $\mu$ is absolutely continuous with respect to Lebesgue measure the measures $\pi$ and $\sigma$ above are uniqeuly defined and furthermore $\sigma = \Pi_{\Gamma_\gm\, \sharp} \mu$.

%Given a measure $\mu$ which is absolutely continuous with respect to the Lebesgue measure and a curve $\gm$ consider $\pi_\gm : $ the projection of $\R^d$ to $\ggm_\gm$.  By above remark $\pi_\gm$ is well defined. We now define the mass projected on a part of the curve: Given $\gm: [0,L] \to \R^d$ in $\C$ for $I \subset [0,L]$ measurable, and $\mu \in \M$ we define
%\begin{equation} \label{def:sig}
%\sigma(\mu, \gm, I) = \mu(\pi_\gm^{-1}(\gm(I))). 
%\end{equation}

\begin{lem}\label{exist}
Consider  a positive measure $\mu$ and parameters $\la>0$, $\p\geq 1$.
Problem \ref{main1} has a minimizer $\gm \in \C$. Furthermore for any minimizer
$\Gamma_\gm$ is contained in $\conv(\mu)$, the  convex hull of the support of $\mu$.
\end{lem}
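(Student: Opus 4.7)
The plan is to establish existence by the direct method, and then to deduce the convex-hull inclusion by projecting minimizers onto $C:=\conv(\supp(\mu))$.

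\emph{Existence.} I take a minimizing sequence $\{\gm_n\}\subset\C$ and reparameterize each $\gm_n$ by arc-length, so $|\gm_n'|=1$ a.e.\ on $[0,L(\gm_n)]$. Lemma \ref{comp} provides a uniform bound $L(\gm_n)\leq L_0$ together with a compact set $K$ containing all $\ggm_{\gm_n}$. Extending each $\gm_n$ to the common domain $[0,L_0]$ via \eqref{curv_ext} keeps it $1$-Lipschitz and confined in $K$, so Arzel\`a--Ascoli furnishes a subsequence converging uniformly to some $\gm_\8\in\C$. Lower semicontinuity of $\EE$ then follows from two ingredients: $L(\cd)=\|\cd\|_{TV}$ is lower semicontinuous under uniform convergence, and uniform convergence $\gm_n\to\gm_\8$ forces $\ggm_{\gm_n}\to\ggm_{\gm_\8}$ in Hausdorff distance, hence $d(x,\ggm_{\gm_n})\to d(x,\ggm_{\gm_\8})$ uniformly in $x\in K$; dominated convergence yields $\FF(\ggm_{\gm_n})\to\FF(\ggm_{\gm_\8})$, and therefore $\gm_\8$ is a minimizer.

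\emph{Convex hull inclusion.} Let $\gm$ be any minimizer and let $P_C:\R^d\to C$ denote the nearest-point projection, which is $1$-Lipschitz. Set $\~\gm:=P_C\circ\gm\in\C$, so that $L(\~\gm)\leq L(\gm)$. The convex-analytic Pythagorean inequality
\[ |x-y|^2\,\geq\,|x-P_C(y)|^2+|y-P_C(y)|^2,\qquad x\in C,\ y\in\R^d, \]
applied with $y$ ranging over $\ggm_\gm$, gives $d(x,\ggm_{\~\gm})\leq d(x,\ggm_\gm)$ for every $x\in\supp(\mu)\sse C$; hence $\FF(\ggm_{\~\gm})\leq\FF(\ggm_\gm)$ and $\EE(\~\gm)\leq\EE(\gm)$. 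To force $\ggm_\gm\sse C$ I would argue by contradiction and split into two cases. (i) If there exists a $\mu$-positive set of $x$ with $d(x,\ggm_\gm)<d(x,\ggm_\gm\cap C)$, then the minimum of $|x-\cd|$ on the compact set $\ggm_\gm$ is attained at some $y^*\in\ggm_\gm\setminus C$, hence $|y^*-P_C(y^*)|>0$ and the Pythagorean gap yields $d(x,\ggm_{\~\gm})<d(x,\ggm_\gm)$ on that set, strictly decreasing $\FF$. (ii) Otherwise, for $\mu$-a.e.\ $x$ the minimum is realized in $\ggm_\gm\cap C$; pick a connected component $(\al,\bt)$ of $\{t:\gm(t)\notin C\}$ and replace $\gm|_{[\al,\bt]}$ by the chord $[\gm(\al),\gm(\bt)]\sse C$ (or by a constant if the component abuts an endpoint of the domain). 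Since $\gm((\al,\bt))\cap C=\emptyset$, $\gm|_{[\al,\bt]}$ cannot itself be a straight segment, so the chord is \emph{strictly} shorter; and since the deleted arc attracts no $\mu$-mass and the modified image still contains $\ggm_\gm\cap C$, $\FF$ does not increase. In either case $\EE$ strictly decreases, contradicting the minimality of $\gm$.

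The main delicacy lies in this last dichotomy. The projection competitor $\~\gm$ alone only yields $\EE(\~\gm)\leq\EE(\gm)$, and equality can a priori hold if $\gm$ travels tangentially along $\partial C$ at a fixed positive distance while attracting no $\mu$-mass; the chord-replacement competitor of case (ii) is what extracts a strict improvement from the length term in precisely that situation. Working directly with the Pythagorean inequality, rather than with differentiability of $P_C$, sidesteps any smoothness assumption on $\partial C$.
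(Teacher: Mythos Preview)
Your proof is correct and follows essentially the same route as the paper: existence via Arzel\`a--Ascoli after the a priori bounds of Lemma~\ref{comp}, and the convex-hull inclusion via the two competitors (projection onto $\conv(\supp\mu)$ and chord replacement on an excursion). Your global dichotomy and explicit use of the Pythagorean inequality for convex projections make the strict-improvement step a bit more transparent than the paper's phrasing, but the argument is the same in substance.
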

We note that since the energy is invariant under reparameterizing the curve it follows that the problem has a minimizer $\gamma \in \C$ which is arc-length parameterized. 
\begin{proof}
Consider a minimizing sequence $\{\gm_n\}$ in $\C$.
Since a reparameterization does not change the value of the functional we can 
assume that $\gm_n$ are arc-length parameterized for all $n$.
Lemma \ref{comp} proves that $\{\gm_n\}$ are uniformly bounded and have uniformly bounded lengths. 
Let $L$ be the supremum of the lengths and let $\tilde \gm_n$ be the extensions of the curves as in
\eqref{curv_ext} to interval $[0,L]$. The curves $\{\tilde \gm_n\}$ 
satisfy the conditions of Arzel\`a-Ascoli  Theorem.
Thus, along a subsequence (which we assume to be the whole sequence) they converge uniformly (and thus in $\C$) to a curve $\gm:[0,L] \to \R^d$. Since all of the curves are 1-Lipschitz, so is $\gm$ and thus it belongs to $\C$. 

 Since $\vph\mapsto\FF(\ggm_\vph)$ is continuous  and
$\vph\mapsto L(\vph)$ is lower-semicontinuous with respect to the convergence in $\C$,
it follows $\liminf_{n \to \infty} \EE(\tilde \gm_n) \geq \EE(\gm)$.
Since $\{\tilde \gm_n\}_n$ is also a minimizing sequence, $\gm$ is a minimizer of $\EE$.

Now we prove that any minimizer is contained in the convex hull of $\supp(\mu)$. The argument relies on fact that the projection to a convex set decreases length, which we state in Lemma \ref{conv_proj} below.
Let $\gm \in \C$ be a minimizer of $\EE$. Assume  it is not contained in the convex hull, $K = \conv(\mu)$, of the support of $\mu$. Then there exists $T \in [0,a]$ such that $\gm(T) \not\in K$. 
Let $[t_1, t_2]$ be the maximal interval such that $\gm((t_1, t_2)) \cap K = \emptyset$.
We claim that $\sigma((t_1,t_2)) = 0$. Otherwise consider $\tilde \gm$ be the projection of $\gm$ onto $K$. The distances between $\tilde \gm(t)$ and points in $K$ are strictly less than the distances between $\gm(t)$ and the points in $K$ and thus $F_\mu^\p(\ggm_{\tilde{\gm}})< F_\mu^\p(\ggm_\gm)$. 
By Lemma \ref{conv_proj} the length of $\tilde \gm$ is less than or equal to the length of $\gm$. Consequently $\EE(\tilde \gm) < \EE(\gm)$, which contradicts the assumption that $\gm$ is a minimizer. 
Thus  $\sigma((t_1,t_2)) = 0$.

If $\gm(t_1)$ and $\gm(t_2)$ belong to $K$ then consider $\gm_2$ obtained by replacing the segment
$\gm|_{[t_1, t_2]}$ of $\gm$ by a straight line segment. Note that the length of $\gm_2$ is less than the length of $\gm$ (since otherwise $\gm|_{(t_1, t_2)}$ would have to be a line segment which contradicts the fact that it is outside of $K$). Also note that $F_\mu^\p(\ggm_{\gm_2}) = F_\mu^\p(\ggm_\gm)$ and thus 
$\EE(\gm_2) < \EE(\gm)$ , which contradicts the assumption that $\gm$ is a minimizer. 

If $\gm(t_1) \not \in K $ then $t_1 = 0$. Noting that $\gm_2 = \gm|_{[t_2, a]}$ has lower energy than $\gm$ contradicts the minimality of $\gm$. The case  $\gm(t_2) \not \in K $ is analogous. 
\end{proof}

\begin{lem} \label{conv_proj}
Given a convex set $\om$, let $\Pi_\om : \R^d \to \overline \om$ be the projection onto $\overline \om$ defined by $\Pi_\om(x) = \argmin_{z \in \overline \om} |x-z|$. Let  $\gm: [a,b] \lra \R^d$ be a rectifiable curve. Then
\[ \H^1 (\Pi_\om (\gm(I))) \leq  \H^1 (\gm(I) ) . \]
\end{lem}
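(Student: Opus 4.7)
The plan is to reduce the statement to the classical fact that the projection onto a closed convex set is $1$-Lipschitz, combined with the general fact that Lipschitz maps do not increase Hausdorff measure.

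First I would verify that $\Pi_\om : \R^d \to \overline{\om}$ is $1$-Lipschitz. Fix $x,y \in \R^d$ and set $p := \Pi_\om(x)$, $q := \Pi_\om(y)$. The first-order optimality characterization of the projection onto the closed convex set $\overline{\om}$ gives
\[
\langle x - p,\, z - p \rangle \leq 0 \quad \text{for all } z \in \overline{\om},
\]
and similarly for $(y,q)$. Taking $z = q$ in the first inequality and $z = p$ in the second and adding yields $\langle (x-y) - (p-q),\, q - p\rangle \leq 0$, i.e.\ $|p-q|^2 \leq \langle x - y,\, p - q\rangle$, and Cauchy--Schwarz then gives $|p-q| \leq |x-y|$.

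Second, I would invoke the general property of Hausdorff measures under Lipschitz maps: if $f : \R^d \to \R^d$ satisfies $|f(u)-f(v)| \leq |u-v|$ for all $u,v$, then for every $A \subseteq \R^d$,
\[
\H^1(f(A)) \leq \H^1(A).
\]
This is immediate from the definition of $\H^1$, since any cover of $A$ by sets of diameter at most $\dt$ is pushed forward by $f$ to a cover of $f(A)$ by sets of diameter at most $\dt$. Applying this with $f = \Pi_\om$ and $A = \gm(I)$ yields
\[
\H^1\bigl(\Pi_\om(\gm(I))\bigr) \leq \H^1(\gm(I)),
\]
which is exactly the desired inequality.

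There is no real obstacle here, as both ingredients are classical. The only mildly subtle point is that the statement is phrased in terms of $\H^1$ of the image rather than the length (total variation) of a parameterization, which is precisely why the Lipschitz-invariance of Hausdorff measure is the correct tool rather than a direct arc-length computation; if $\gm$ has any self-intersections, the length of $\Pi_\om \circ \gm$ could in principle exceed $\H^1(\gm(I))$, but it cannot exceed the length of $\gm$, and what we need is only the inequality at the level of sets, which the Hausdorff measure argument delivers directly.
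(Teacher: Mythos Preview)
Your proof is correct and follows essentially the same approach as the paper: both rest on the fact that the projection onto a closed convex set is $1$-Lipschitz, from which the Hausdorff-measure inequality follows immediately. You supply the standard variational proof of the $1$-Lipschitz property and spell out the Hausdorff-measure step, whereas the paper simply cites Brezis for the former and declares the rest ``readily follows''; the paper additionally records the equality case $x-\Pi_\om(x)=y-\Pi_\om(y)$, which is not needed for this lemma but is used just above it.
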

\begin{proof}
It is well known that projection to a convex set is a 1-Lipschitz mapping, see Proposition 5.3 in the book by Brezis \cite{Brezis}. That is  for all $x,y \in \R^d$
\[ |\Pi_\om(x) - \Pi_\om(y)| \leq |x-y|. \]
Furthermore equality holds only if $x - \Pi_\om(x) = y - \Pi_\om(y)$. 
The claim of the lemma readily follows. 
\end{proof}
%
%Choose $t\in I$ such that $\gm(t)\in \argmin_{s\in I}
%d\big(\gm(s),\pd C \big)$, and let $P:=\gm(t)$,
%$P'=\pi(P)$. Let 
%$$L_P:\R^2\lra \R^2,\qquad L_P(x):=x-P+P',$$ 
%and consider the curve
%$$\tilde{\gm}:I\lra \R^2,\qquad \tilde{\gm}(t):=L_P\big(\gm(t)\big).$$
%Clearly $\gm$ and $\~\gm$ have the same length,
%and by construction $F_\mu^\p(\ggm_{\tilde{\gm}})< F_\mu^\p(\ggm_\gm)$. 
%Thus 
%letting $\om$ be the convex hull of $\supp(\mu)$, it follows that
%any minimizer is contained in the convex hull
%of $\supp(\mu)$.

%%%%%%%%%%%%%%%%%%%%%%%%%%%%%%%%%%%%%%%%%%%%%
\section{Injectivity}\label{inj}
%%%%%%%%%%%%%%%%%%%%%%%%%%%%%%%%%%%%%%%%%%%%%

The main aim of this section is to prove injectivity for 
minimizers of Problem
\ref{main1} in two dimensions.
We say that $P \in \Gamma_\gm$ is a \emph{double point} if $\gm^{-1}(P)$ has at least two elements. 
Our goal is to show that there are no double points. 
%
%For the sake of brevity, given $\gm\in \C$, denote by
%$N(\gm)$ 
%the set of non injectivity of $\gm$,
%i.e.
%$$t\in N(\gm) \Longleftrightarrow \exists s\neq t: \gm(s)=\gm(t).$$
%The image (through $\gm$) of any subset $A\sse N(\gm)$ 
%will be called {\em ``double part''}.
 Note that if $\ggm_\gm$ is a simple curve, then it admits
 an injective parameterization, which is shorter than any noninjective parameterization. Thus in the following we will
 consider only minimizers containing points with order at least 3, that is points $P$ such than for $r>0$ small enough $(\Gamma_\gm \cap B(P,r)) \backslash \{P\}$ has at least three connected components. 
 
\begin{lem}\label{non zero angle}
 Let $\mu\in \M$ and let $\lambda>0$ and $p \geq 1$. Let $\gm:[0,L] \to \R^d$ be an arc-length-parameterized minimizer of $\EE$. 
Assume there exist  times $0<t<s<L$ such that $\gm(t)=\gm(s)$.
Then $\gm$ is differentiable at $t$ and at $s$.

Furthermore $\gm'(t)= \gm'(s)$ or $\gm'(t)= -\gm'(s)$.
\end{lem}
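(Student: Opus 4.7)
The plan has two parts: (a) a local \emph{corner-shortcut} to force differentiability at $t$ and $s$, exploiting the double-point structure to make the shortcut strictly beneficial; (b) a \emph{segment-reversal} trick, which produces another minimizer and lets me reuse (a) to link the two tangents. Throughout, the regularity estimates of Section~\ref{app} give $\gm' \in BV$, so the one-sided unit tangents $v_t^\pm := \gm'(t^\pm)$ and $v_s^\pm := \gm'(s^\pm)$ exist.

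\emph{Differentiability at $t$ (analogously at $s$).} Suppose for contradiction $v_t^- \neq v_t^+$, i.e.\ $|v_t^- + v_t^+| < 2$. For small $\vep > 0$ let $\tilde\gm_\vep$ be obtained from $\gm$ by replacing $\gm|_{[t-\vep, t+\vep]}$ with the straight chord from $\gm(t-\vep)$ to $\gm(t+\vep)$, reparameterized by arc length. The length gain is
\[ \Delta L(\vep) = 2\vep - |\gm(t+\vep) - \gm(t-\vep)| = \bigl(2 - |v_t^- + v_t^+|\bigr)\vep + o(\vep) \geq c_0\,\vep \]
for some $c_0 > 0$. The double-point structure is essential for the $\FF$-term: $P = \gm(t) = \gm(s)$ remains in $\Gamma_{\tilde\gm_\vep}$ through the unchanged $s$-branch, and the removed arc $A_\vep := \gm([t-\vep, t+\vep])$ lies in $\bar B(P,\vep)$. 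Set $M_\vep := \{x : d(x, A_\vep) = d(x, \Gamma_\gm)\}$ and $M_P := \{x : d(x, P) = d(x, \Gamma_\gm)\}$. For $x \notin M_\vep$, $d(x, \Gamma_{\tilde\gm_\vep}) \leq d(x, \Gamma_\gm)$; for $x \in M_P$, $d(x, \Gamma_{\tilde\gm_\vep}) \leq d(x, P) = d(x, \Gamma_\gm)$; and for $x \in M_\vep \setminus M_P$, $d(x, \Gamma_{\tilde\gm_\vep}) \leq d(x, P) \leq d(x, \Gamma_\gm) + \vep$. Combining,
\[ \FF(\Gamma_{\tilde\gm_\vep}) - \FF(\Gamma_\gm) \leq p\,(\D)^{p-1}\,\vep\,\mu(M_\vep \setminus M_P). \]
The sets $M_\vep \setminus M_P$ decrease with $\vep$ and have empty intersection, so $\mu(M_\vep \setminus M_P) \to 0$ by continuity of $\mu$ from above; hence the $\FF$-change is $o(\vep)$, strictly dominated by $\la\Delta L(\vep) \geq \la c_0\vep$. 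This gives $\EE(\tilde\gm_\vep) < \EE(\gm)$ for small $\vep$, contradicting minimality and forcing $v_t^- = v_t^+$.

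\emph{Tangent relation.} Define the reversed curve
\[ \hat\gm(\tau) := \begin{cases} \gm(\tau), & \tau \in [0,t] \cup [s,L], \\ \gm(t+s-\tau), & \tau \in [t,s]. \end{cases} \]
Continuity at $\tau = t$ and $\tau = s$ follows from $\gm(t) = \gm(s) = P$, and $\hat\gm$ is arc-length parameterized with $\Gamma_{\hat\gm} = \Gamma_\gm$, $L(\hat\gm) = L(\gm)$. Hence $\EE(\hat\gm) = \EE(\gm)$, so $\hat\gm$ is another minimizer with a double point at $t$. Applying the first part to $\hat\gm$, it is differentiable at $t$; but its left derivative there equals $\gm'(t)$ while its right derivative equals $\lim_{h\to 0^+}(\gm(s-h) - \gm(s))/h = -\gm'(s)$. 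Differentiability forces these to agree, so $\gm'(t) = -\gm'(s)$, which in particular yields $\gm'(t) = \pm\gm'(s)$.

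\emph{Main obstacle.} The delicate step is the estimate $\FF(\Gamma_{\tilde\gm_\vep}) - \FF(\Gamma_\gm) = o(\vep)$. At a \emph{generic} (non-double-point) corner the analogous cost is $O(\vep)$ and can exactly balance the length savings — this is why $\EE$-minimizers may genuinely have corners in general (cf.\ \cite{Sl13} and the introductory remarks). The double-point hypothesis is what breaks the balance: $P$ survives on the $s$-branch, so the mass previously served by $A_\vep$ retains a nearby projection onto the modified curve, collapsing the cost below linear order in $\vep$ and allowing the length term to win.
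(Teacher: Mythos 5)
Your proof is correct and follows essentially the same route as the paper's: a chord shortcut at the double point whose $\FF$-cost is $o(\vep)$ precisely because $P$ survives on the other branch (so the affected mass $\mu(M_\vep\setminus M_P)$ vanishes by continuity from above and is beaten by the linear length saving), followed by reversal of $\gm|_{[t,s]}$ to relate the two tangents. Your bookkeeping via one-sided tangents from Proposition \ref{reg_p} in place of the paper's angle sequences is sound, and in fact both your argument and the paper's yield the slightly stronger conclusion $\gm'(t)=-\gm'(s)$.
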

\begin{proof}
Assume the claim does not hold. Without a loss of generality we can assume that $\gamma$ is not differentiable at $s$.
Then there exist sequences 
$\{s_n^-\}\searrow s$, $\{s_n^+\}\nearrow s$, such that 
$\angle \gm(s_n^-)\gm(s)\gm(s_n^+)\ra\al  < \pi$ as $n \to \infty$. 
Note that by Lemma \ref{comp} 
for all $z \in \supp(\mu)$ and all $y \in \Gamma_\gm$, $|z-y| < \diam(\supp(\mu))$.
\begin{figure}[h!]
\includegraphics[scale=1.1]{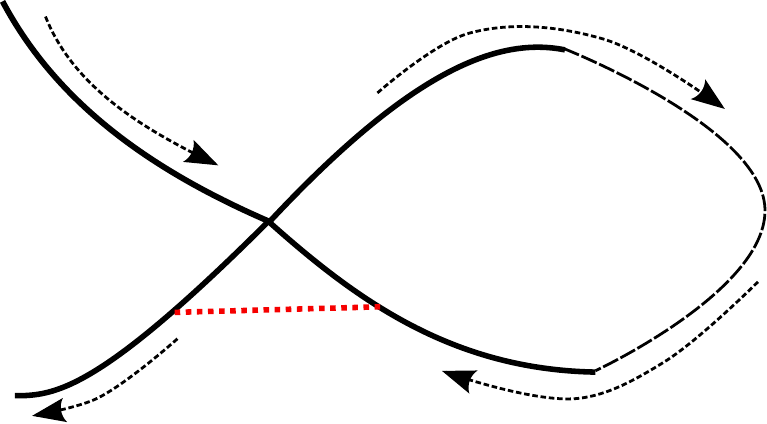}
\put(-127,43){$\gm(s_n^-)$}
\put(-209,43){$\gm(s_n^+)$}
\put(-151,63){$\gm(s)=\gm(t)$}
%\put(-220,75){$\al$}
\caption{This is a schematic representation of the variation. 
The black lines belong to the (graph of) $\gm$, while the red
dotted line belongs to the (graph of) competitor $\~\gm_n$. Time increases
along the direction of the arrows.}
\end{figure}

Consider the 
competitors $\~\gm_n$ constructed in the following way:
Let
\begin{alignat*}{2}
\xi_n^* & :[0,1] \lra \R^d, & \qquad \xi_n^*(u)& :=(1-u)\gm(s_n^-)+ u\gm(s_n^+), \\
\xi_n &:\big[0,|\gm(s_n^-)-\gm(s_n^+)|\big]\lra \R^d, & \qquad
 \xi_n(u)&:= \xi_n^*\big(u/|\gm(s_n^-)-\gm(s_n^+)|\big).
\end{alignat*} 
 Let $\~\gm_n:\big[0,L(\gm)-(s_n^+-s_n^-)+|\gm(s_n^-)-\gm(s_n^+)| \big]$, where
$$ 
\qquad
\~\gm_n(u):=
\left\{
\begin{array}{cl}
\gm(u) & \text{if } u\leq s_n^-,\\
\xi_n(u-s_n^-) & \text{if } s_n^-\leq u\leq s_n^-+|\gm(s_n^-)-\gm(s_n^+)|,\\
\gm\big(u-s_n^--|\gm(s_n^-)-\gm(s_n^+)|+s_n^+\big)  & \text{if }  u
\geq s_n^-+|\gm(s_n^-)-\gm(s_n^+)|.
\end{array}
\right. $$

Since by hypothesis $\{\angle \gm(s_n^-)\gm(s)\gm(s_n^+)\}\ra \al\neq 0$, it follows 
(for any sufficiently large $n$)
$$|s_n^+-s_n^-|- |\gm(s_n^-)-\gm(s_n^+)|  \geq  c  |s_n^+-s_n^-|,$$ 
for some constant $c>0$ independent of $n$.
Hence
\begin{equation}\label{ineq length}
L(\gm)\geq L(\~\gm_n)+ c  |s_n^+-s_n^-|.
\end{equation}
By taking $n$ large we can assume that $ |s_n^+-s_n^-| < 1$.

We claim that 
\begin{equation}\label{ineq energy}
F_\mu^\p(\~\gm_n)-F_\mu^\p(\gm)\leq 
\mu\left(\left\{z:\argmin_{y\in \ggm_\gm}|z-y| \sse \gm\big((s_n^-,s_n^+)\backslash \{s\}\big)
\right\} \right) \p D^{\p-1} |s_n^+-s_n^-|,
\end{equation}
where $D:=1+ \diam\supp(\mu)$.
Note that if a point $z$ satisfies
$$d(z,\ggm_\gm) < d(z,\ggm_{\~\gm_n}) $$
then $\argmin_{y\in \ggm_\gm}|z-y| \sse \gm\big((s_n^-,s_n^+)\backslash \{s\}\big)$. 

The constant $\p D^{\p-1}$ is due to the fact that any such point
$z\in \supp(\mu)$ satisfies, due to Lemma \ref{exist} and construction of $\tilde \gm_n$,
$$\max\{d(z,\ggm_\gm),d(z,\ggm_{\~\gm_n}) \}\leq D.$$
By construction there exists a point $z_n'\in \ggm_{\~\gm_n}\backslash
\ggm_\gm$ satisfying $|z-z_n'|=d(z,\ggm_{\~\gm_n})$. Denoting by
$z'\in\gm\big((s_n^-,s_n^+)\backslash \{s\}\big)$ a point satisfying
$|z-z'|=d(z,\ggm_\gm)$, we conclude
$$\big||z-z'|^p-|z-z_n'|^p\big|\leq \p D^{\p-1} |s_n^--s_n^+|. $$
Since for sufficiently large $n$ it holds
$$\mu\left(\left\{z:\argmin_{y\in \ggm_\gm}|z-y| \sse \gm\big((s_n^-,s_n^+)\backslash \{s\}\big)
\right\} \right)= o(|s_n^+-s_n^-|),$$
 combining with \eqref{ineq length} gives that the minimality of
$\gm$ is contradicted by $\~\gm_n$ for sufficiently large $n$.
\medskip

To show the second claim assume that  $\gm'(t) \neq  \gm'(s)$ and $\gm'(t) \neq -\gm'(s)$.
Consider the following "reparameterization  " of the curve $\gm$. Let $\tilde \gamma : [0,L] \to \R^d$ be defined by
\[ \tilde \gm(r) = \begin{cases}
\gm(r) \quad & \te{for } r \in [0,t] \\
\gm(s-(r-t)) & \te{for } r \in (t,s] \\
\gm(r) & \te{for } r \in (s,L]. 
\end{cases}\]
Then $\tilde \gm$ is also a minimizer of $\EE$. However $\tilde \gm$ is not differentiable at $t$ (and at $s$), which contradicts the first part of the lemma.
\end{proof}

\medskip

\begin{proof}[Proof of Theorem \ref{main}]
Let $\gm: [0,L] \to \R^2$ be an arc-length-parameterized minimizer of $\EE$, and let $\Gamma_\gm = \gm([0,L])$. Recall that $P \in \Gamma_\gm$ is a double point if $\gm^{-1}(P)$ has at least two elements. 
Our goal is to show that $\Gamma_\gm$ has no double points. 

We claim that there exists $\delta_1 \in (0,1)$ such that $\gm$ is injective on  $[0, \delta_1)$.
The argument by contradiction is straightforward, by considering $\gm$ restricted to $[ \delta_1, L]$ for $\delta_1>0$ small to be a competitor.
Likewise for $\delta_1$ small $\gm((L - \delta_1, L])$ has no double points.

Assume that there are double points on $\gamma([\delta_1, L-\delta_1])$.
Let $t_2 = \sup\{t < L - \delta_1 \::\: \gamma(t)$ is a double point$\}$.
Note that $\gm$ is injective on $(t_2, L]$. 
We claim that $\gm(t_2)$ is a double point. 
Assume it is not. Then there exist increasing sequences $s_k < r_k < t_2$ converging to $t_2$ such that $\gm(s_k) = \gm(r_k)$. By considering their subsequences we can assume that $r_k < s_{k+1}$ for all $k$. Then the intervals $[s_k, r_k]$ are all mutually disjoint. Since $\gm(s_k) = \gm(r_k)$ we conclude that $\|\gm \|_{TV(s_k, r_k)} \geq \pi$ which implies that $\| \gm \|_{TV([0,L])}$ is infinite. This contradicts the regularity estimate of Proposition \ref{reg_p}. Thus $\gm(t_2)$ is a double point.
Hence there exists $ t_1\in (\delta_1, t_2)$ such that $\gm(t_1) = \gm(t_2)$.
By Lemma \ref{non zero angle}, 
there are two possibilities: either $\gm'(t_1) = \gm'(t_2)$ or $\gm'(t_1) = -\gm'(t_2)$. Since the arguments are analogous we assume  $\gm'(t_1) = \gm'(t_2)$.
By regularity of $\gm$ established in \eqref{regest2}, there exists $\delta_2 \in (0, \delta_1)$ such that 
$\| \gm' \|_{TV(t_1, t_1+ \delta_2)} < \frac{1}{8}$ and $\| \gm' \|_{TV(t_2, t_2+ \delta_2)} < \frac{1}{8}$. Therefore $\gm$ restricted to $[t_1, t_1+\delta_2]$ is injective. Since $\gm((t_1, t_1+ \delta_2))$ has no double points $\gm((t_1, t_1+ \delta_2)) \cap \gm((t_2, t_2+ \delta_2)) = \emptyset$.

We can assume without a loss of generality that $\gm(t_1) = 0$ and  $\gm'(t_1) = e_1$. 
The bound on total variation of $\gm'$ above implies that 
$\gm' \cdot e_1 > \frac{7}{8}$ on the intervals considered.
Therefore we can reparameterize the curve using the first coordinate as the parameter. That is
there exists Lipschitz functions $x, \alpha, \beta : [0, \frac{7}{8}\delta_2) \to \R$ such that $\frac{7}{8}  < x'(s) \leq 1$ a.e. and for all $s \in [0, \delta_2]$
\[ \gm_1(t_1+s) = (x(s), \alpha(x(s))) \quad \te{ and } \quad  \gm_1(t_2+s) = (x(s), \beta(x(s))). \]
Let $\delta = \delta_2/3$.
Without a loss of generality we can assume that $\alpha > \beta$ on $(0, \delta)$.

Our goal is to arrive at contradiction by showing that  $\alpha = \beta$ on some interval $[0, \tilde \delta)$. 
 The reason is that $\alpha$ cannot separate from $\beta$ is that for $\alpha$ to turn upward, by Lemma \ref{reg_U}, there must be mass beneath $\alpha$ talking to that part of the curve. But the mass beneath $\alpha$ which talks to $\alpha$ must lie above $\beta$. However the region between $\alpha$ and $\beta$ cannot contain enough mass to allow for the needed turn. Below we make this argument precise. 
For a.e. $x \in [0, \delta)$, $\alpha$ and $\beta$ are differentiable at $x$ and we define $\ell_\alpha^+(x) = \{ (x, \alpha(x)) + r (\alpha'(x),1) \::\: r \geq 0 \}$ to be the halfline perpendicular to $\alpha$ at $x$ extending above $\alpha$ and 
$\ell_\alpha^-(x) = \{ (x, \alpha(x)) + r (\alpha'(x),1) \::\: r \leq 0 \}$ the halfline below, as illustrated on Figure \ref{fig:ab}. The halflines
$\ell_\beta^+(x)$ and $\ell_\beta^-(x)$ are defined analogously. 
\begin{figure}[h!]
\includegraphics[scale=1.1]{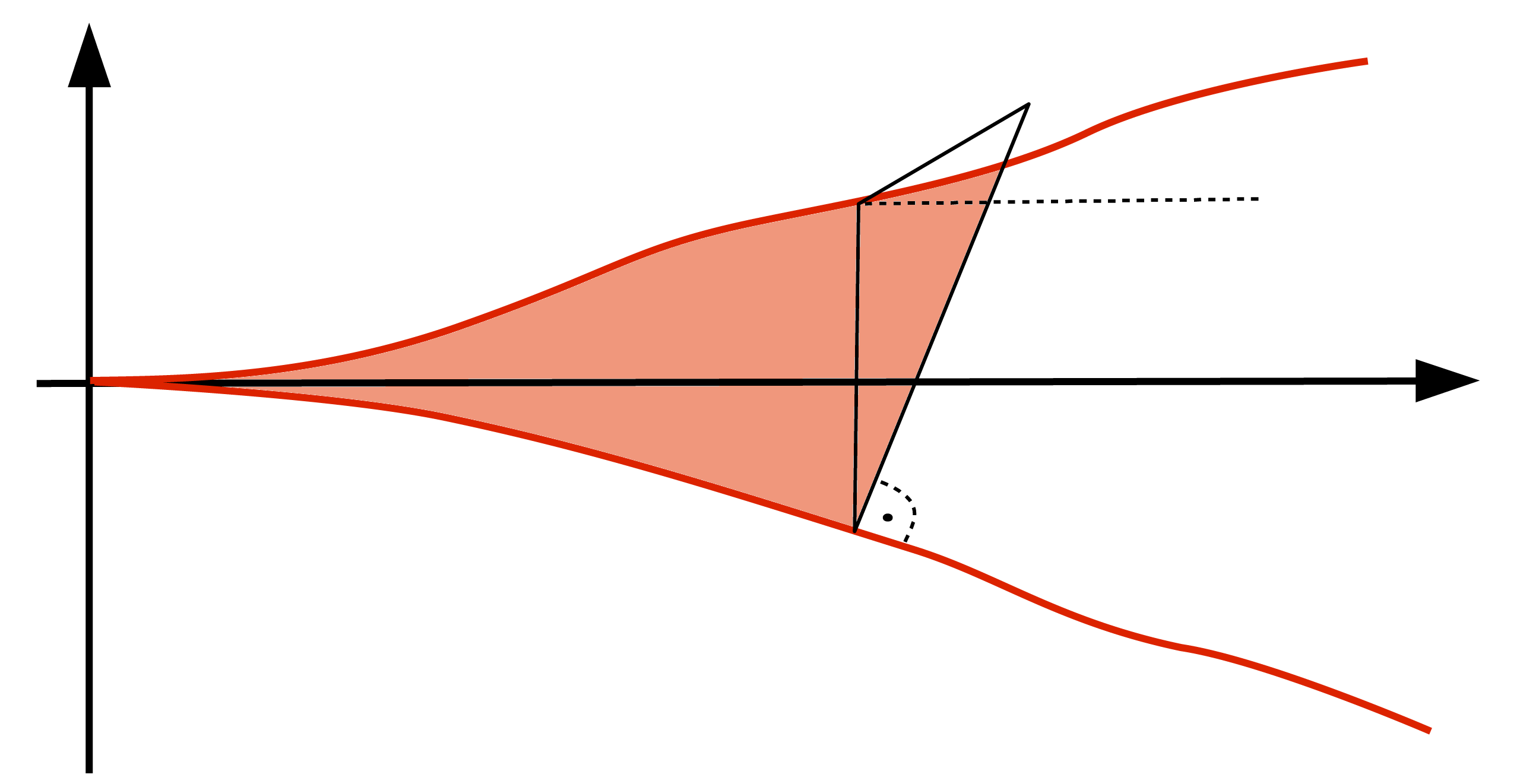}
\put(-156,93){$x$}
\put(-206,94){\Large $A_\beta(x)$}
\put(-125,103){$\ell_\beta^+(x)$}
\put(-154,45){$P$}
\put(-154,134){$Q$}
\put(-109,153){$Z$}
\put(-50,149){\Large $\alpha$}
\put(-50,26){\Large $\beta$}
\put(-148,72){\small $\theta$}
\put(-125,136){\small $\xi$}
\put(-137,140){\small $\bar \ell$}
\caption{The geometry of the configuration near the last double point.}
\label{fig:ab}
\end{figure}

 Let $S_\alpha(x)= \{(z, \alpha(z)) \: : \: z \in [0,x]\}$ and $S_\beta(x)= \{(z, \beta(z)) \: : \: z \in [0,x]\}$.
Let $ U_\alpha(x)$ be the connected component of $\R^2 \backslash (S_\alpha(x) \cup \ell_\alpha^-(x) \cup S_\beta(\delta))$ containing the point $(x/2, (\alpha(x/2)+\beta(x/2))/2)$.
Analogously we define $A_\beta(x)$  be the connected component of $\R^2 \backslash (S_\alpha(\delta) \cup \ell_\beta^+(x) \cup S_\beta(x))$ containing the point $(x/2, (\alpha(x/2)+\beta(x/2))/2)$.
Note that all of the mass below the curve $\alpha$ and talking to $S_\alpha(x)$ is a subset of $U_\alpha(x)$. Likewise all of the mass above the curve $\beta$ and talking to $S_\beta(x)$ is a subset of $A_\beta(x)$.

We introduce:
 \begin{equation} \label{fg}
 f(x) = \sup_{0 \leq z \leq x}  \frac{\alpha'(z)}{|(1, \alpha'(z))|} \quad \te{ and } \quad
 g(x) = \inf_{0 \leq z \leq x}  \frac{\beta'(z)}{|(1, \beta'(z))|}.  
 \end{equation}
 Note that $f(x) - g(x) > 0$ on $(0, \delta)$ and that, using the assumption on total variation of $\gm'$, it follows that for $x \in [0, \delta)$, $|f(x)|$ and $|g(x)|$ are less than $\frac{1}{8}$. 

 Let $\bar \ell$ be the line passing through $Q=(x, \alpha(x))$ with slope is $\frac18$. It stays above the graph of $\alpha$ on $(x, \delta)$. Let $Z$ be the intersection point of $\bar \ell$ and $\ell_\beta^+(x)$. We note that $\theta < \arctan(\frac{1}{8})$ and thus $[QZ]$ is the shortest side of triangle $\Delta PZQ$. Therefore
\[ |PZ| < 2 |QP| = 2 (\alpha(x) + \beta(x)) \leq 2 x (f(x) - g(x)). \]
Since $f(x) - g(x)$ is an increasing function we conclude that 
\[ \sup_{z \in A_\beta(x)} d(z,\Gamma_\gm) \leq    2 x (f(x) - g(x)). \]
Likewise 
\[ \sup_{z \in U_\alpha(x)} d(z,\Gamma_\gm) \leq    2 x (f(x) - g(x)). \]
%Note that $\delta < \frac12$. Estimates on total variation of $\gm'$ and elementary geometry imply that diam$(U_\alpha(x)) < 1$ and diam$(A_\beta(x)) < 1$ for $x \in (0, \delta/2)$.
Lemma \ref{reg_U} implies that 
\begin{equation} \label{test1}
f(x) \leq  \frac{p}{\lambda}  (2x(f(x) - g(x))^{p-1}   \mu(U_\alpha(x)) \quad \te{ and } \quad
g(x)  \geq  \frac{p}{\lambda} (2x(f(x) - g(x))^{p-1}  \mu(A_\beta(x)).
\end{equation}
We first focus on $p \geq 2$. From the above inequalities it  follows that for some constant $c>0$
\[ ( f(x) - g(x))^{2-p} \leq c x^{p-1} . \]
Since as $x \to 0^+$ the left-hand side remains bounded from below while the right-handside converges to zero we obtain a contradiction, as desired.

We now consider the more delicate case: $1 \leq p <2$. Recall that we now assume that $\mu$ has bounded density $\rho$.
To obtain a bound on $\mu(A_\beta(x))$ we estimate the area of $A_\beta(x)$.
The area of $A_\beta(x)$ is bounded from above by the sum of the areas of the region between the curves to the left of line segment $[PQ]$ and the area of triangle $PZQ$ on Figure \ref{fig:ab}.

We note that $\theta < \arctan(\frac{1}{8})$ and the angle $\angle ZQP$ is $\frac{\pi}{2} +  \arctan(\frac{1}{8})$. Therefore $\xi > \frac{\pi}{6}$. 
Using the law of sines and $\alpha(x) - \beta(x) \leq x(f(x) - g(x))$ we obtain
\[ \frac{1}{2x(f(x) -g(x))} < \frac{\sin \xi}{\alpha(x) - \beta(x)} = \frac{\sin \theta}{|QZ|} < \frac{1}{8 |QZ|}. \]
Therefore
\[ \Area(\Delta PZQ) \leq \frac12 |QP| \cdot |QZ| \leq \frac{1}{8}  x^2 (f(x)-g(x))^2 . \]
Consequently
 \[ \Area(A_\beta(x)) \leq x^2 (f(x)- g(x)) +   x^2 (f(x)-g(x))^2  \leq 2 x^2 (f(x)- g(x)). \]
 Same upper bound holds for $ \Area(U_\alpha(x))$. 
 Therefore 
 \[ \max\{ \mu(U_\alpha(x)) ,\mu(A_\beta(x)) \} \leq 2 \| \rho \|_{L^\infty} x^2 (f(x)-g(x)). \]
 Combining with estimate \eqref{test1}  and using that $2x(f(x) - g(x)) < 1$ gives that for  a.e $x \in [0, \delta)$
 \[0 \leq  f(x) - g(x) \leq 4 \frac{p}{\la} \|\rho\|_{L^\infty} \, x^2 (f(x)-g(x)). \]
This implies that for a.e.  $x>0$ small enough $f(x) =0$ and $g(x)=0$, which means that the curves coincide.
Contradiction.
\end{proof}

%%%%%%%%%%%%%%%%%%%%%%%%%%%%%%%%%%%%%%%%%%%%%%%%%%%%%
\section{Curvature of minimizers} \label{app}

In \cite{Sl13, LuSl} we studied the average-distance problem considered over the set of connected
1-dimensional sets. Here we study the problem among a more restrictive set of objects, namely parameterized curves. The conditions for stationarity and regularity estimates of \cite{Sl13, LuSl}
still apply in this setting. Here we state the estimates for general $p \geq 1$, while we previously considered only $p=1$. The extension is straightforward. 

We start by stating the conditions for the case that $\mu$ is a discrete measure, $\mu = \sum_{i=1}^n m_i \delta_{x_i}$ where $m_i>0$ for all $i$ and $\sum_{i=1}^n m_i=1$. Arguing as in Lemma 7 of  \cite{Sl13} we conclude that any minimizer of  Problem \ref{main1} is a solution of a euclidean traveling salesman 
(for Problem \ref{main pen} the minimizers  were Steiner trees) and is thus a piecewise linear curve with no self-intersections (i.e. $\gamma$ is injective). Such $\gamma$ can be described as a graph as follows. Let $V$, the set of vertices, be the collection of all minimizers over $\Gamma_\gamma$ of distance to each of the point in $X=\{x_1, \dots, x_n\}$. That is let 
\begin{equation} \label{vert}
 V= \bigcup_{i=1}^n \argmin_{z \in \Gamma_\gamma} |z - x_i|. 
\end{equation}
We can write $V= \{v_1, \dots, v_m\}$ where $v$ are ordered as they appear along $\Gamma_\gamma$ (in  increasing order with respect to parameter of $\gamma$). Then $\Gamma_\gamma$ is the piecewise linear curve $[v_1, \dots, v_m]$.

For $j =1, \dots, m$ let $I_j$ be the set of indices of points in $X$ for which $v_j$ is the closest point in $V$
\begin{align} 
\begin{split}
 I_j & = \{ i \in \{1, \dots, n\} \: : \:  (\forall k=1, \dots, m) \quad d(x_i, v_j) \leq d(x_i, v_k) \} \\
      & = \{ i \in \{1, \dots, n\} \: : \:  (\forall y \in \Gamma_\gamma) \quad d(x_i, v_j) \leq d(x_i, y) \}.
\end{split}
\end{align}
If $i \in I_j$ then we say that $x_i$ {\em talks to } $v_j$. 
We say that a vertex $v_j$ is {\em tied down} if  for some $i$, $v_j =x_i$.
We then say that $v_j$ is {\em tied to} $x_i$.
A vertex which is not tied down is called  {\em free}.
As shown in \cite{Sl13}, if $x_i$ talks to $v_j$ and $v_j$ is free then 
$x_i$ cannot talk to any other vertex.

As in \cite{Sl13} we consider the optimal transportation plan between $\mu$ and its projection onto
 $\Gamma_\gamma$. That is, consider an $n$ by $m$ matrix $T$ such that 
\begin{equation} \label{otf}
  T_{ij} \geq 0,\;  \sum_{j=1}^m  T_{ij} = m_i, \te{ and }   T_{ij} > 0 \te{ implies } i \in I_j 
\end{equation}
Note that $\mu =\sum_{i=1}^n  \sum_{j=1}^m  T_{ij} \delta_{x_i}$. 
Furthermore observe that if $v_j$ is tied to $x_i$ then $i \in I_j$ and $T_{ij} = m_i$. 
Let $\pi = \sum_{i,j} m_i \delta_{x_1} \otimes \delta_{v_j}$. We note that the first marginal of $\pi$ is $\mu$ and that it describes an optimal transportation plan between $\mu$ and a measure supported on $V \subset \Gamma_\gm$.
We define $\sigma$ to be the second marginal of $\pi$ as before (above Lemma \ref{exist}). Then
$\sigma$ is a projection of $\mu$ onto the set $\Gamma_\gamma$ in that 
the mass of  $\mu$ is transported to a closest point on $\Gamma_\gamma$.  More precisely
\begin{equation} \label{def_sigma}
\sigma=  \sum_{j=1}^m \sum_{i=1}^n T_{ij} \delta_{v_j}.
\end{equation}
We note that the matrix $T$ describes an optimal transportation plan  between $\mu$ and $\sigma$ with respect to any of the transportation costs $c(x,y) = |x-y|^q$, for $q \geq 1$. 
 
We note that in this discrete setting
\begin{align} \label{dis_ene}
\begin{split}
E_\mu^{\la,\p}(\gamma) & = \sum_{i=1}^n m_i d^p(x_i, \Gamma_\gamma) + \lambda  \sum_{i=1}^{m-1} |v_{i+1}-v_i| \\
& = \sum_{j=1}^m \sum_{i \in I_j} T_{ij} |x_i-v_j|^p + \lambda \sum_{i=1}^{m-1} |v_{i+1}-v_i|
\end{split}
\end{align}

Taking the first variation in $v_j$ provides an extension to $p>1$ of
conditions for stationarity of Lemma 9 in \cite{Sl13}:
\begin{lem} \label{1st_var}
Assume that $\gamma$ minimizes $E_\mu^{\la,p}$ for discrete $\mu = \sum_{i=1}^n m_i \delta_{x_i}$. Let $V$ be the set of vertices as defined in \eqref{vert} and $T$ be any matrix (transportation plan) satisfying \eqref{otf}. Then
\begin{itemize}
\item For endpoints $j=1$ and $j=m$ let $w = v_2$ if $j=1$ and $w = v_{m-1}$ otherwise.

If $p>1$ or $v_j$ is free then
\begin{equation} \label{var_end}
\sum_{i \in I_j} p \, T_{ij} \, (x_i - v_j)|x_i - v_j|^{p-2}   + \lambda \, \frac{w-v_j}{|w-v_j|} = 0
\end{equation}
If $v_j$ is tied to $x_k$ and $p=1$  then 
\begin{equation} \label{var_end2}
\left | \sum_{i \in I_j, i \neq k}  T_{ij} \, \frac{x_i - v_j}{|x_i - v_j|}  + \lambda \, \frac{w-v_j}{|w-v_j|}
\right| \leq m_k
\end{equation}
\item If $j=2, \dots, m-1$ then if $p>1$ or $v_j$ is free 
\begin{equation} \label{var_cor}
\sum_{i \in I_j} p \, T_{ij} \, (x_i - v_j)|x_i - v_j|^{p-2}  + \lambda \, \left( \frac{v_{j-1}-v_j}{|v_{j-1}-v_j|} +  \frac{v_{j+1}-v_j}{|v_{j+1}-v_j|}  \right) = 0
\end{equation}
If $v_j$ is tied to $x_k$ and $p=1$ then
\begin{equation} \label{var_cor2}
\left|\sum_{i \in I_j, i \neq k}   T_{ij} \, \frac{x_i - v_j}{|x_i - v_j|}   + \lambda \, \left( \frac{v_{j-1}-v_j}{|v_{j-1}-v_j|} +  \frac{v_{j+1}-v_j}{|v_{j+1}-v_j|}  \right) \right| \leq m_k.
\end{equation}
\end{itemize}
\end{lem}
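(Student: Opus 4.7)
The plan is a standard first-variation argument on the discrete energy formula \eqref{dis_ene}. Fix $j \in \{1,\ldots,m\}$, a direction $\eta \in \R^d$, and a scalar $\varepsilon$ of small absolute value. Form the competitor $\gamma_\varepsilon \in \C$ whose trace is the piecewise linear path through $v_1,\ldots,v_{j-1},v_j + \varepsilon\eta,v_{j+1},\ldots,v_m$, reparameterized by arc length; this is a valid competitor for $|\varepsilon|$ small. Set $v_{j'}(\varepsilon) := v_{j'}$ for $j'\neq j$ and $v_j(\varepsilon) := v_j + \varepsilon\eta$, and define
$$\Phi(\varepsilon) := \sum_{j'=1}^m \sum_{i \in I_{j'}} T_{ij'}\,|x_i - v_{j'}(\varepsilon)|^p + \lambda \sum_{j'=1}^{m-1} |v_{j'+1}(\varepsilon) - v_{j'}(\varepsilon)|.$$
Because $v_{j'}(\varepsilon) \in \Gamma_{\gamma_\varepsilon}$ and $T$ satisfies \eqref{otf}, one has $E_\mu^{\la,p}(\gamma_\varepsilon) \leq \Phi(\varepsilon)$ while $\Phi(0) = E_\mu^{\la,p}(\gamma)$, and the minimality of $\gamma$ forces $\Phi(\varepsilon) \geq \Phi(0)$ for all $\varepsilon$ and all $\eta$. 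Thus $0$ is a global minimum of $\varepsilon \mapsto \Phi(\varepsilon)$ for every fixed $\eta$.

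In the smooth case, where either $p > 1$, or $p = 1$ and $v_j$ is free (so $v_j \neq x_i$ for all $i \in I_j$), every summand of $\Phi$ is $C^1$ near $\varepsilon = 0$. Differentiating gives
$$\Phi'(0) = \left(- \sum_{i \in I_j} p\, T_{ij}(x_i - v_j)|x_i - v_j|^{p-2} - \lambda \left(\frac{v_{j-1}-v_j}{|v_{j-1}-v_j|} + \frac{v_{j+1}-v_j}{|v_{j+1}-v_j|}\right)\right) \cdot \eta$$
for interior $j$, and the condition $\Phi'(0) = 0$ for all $\eta \in \R^d$ is exactly \eqref{var_cor}. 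For the endpoints only one of the two length edges survives, contributing $-\lambda \frac{w-v_j}{|w-v_j|} \cdot \eta$ with $w \in \{v_2, v_{m-1}\}$, and the analogous argument yields \eqref{var_end}.

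The delicate case is $p = 1$ with $v_j$ tied to some $x_k$: the summand $T_{kj}|x_k - v_j(\varepsilon)| = m_k\,\varepsilon\,|\eta|$ (for $\varepsilon > 0$) is non-smooth at the origin, so one works instead with the right derivative
$$\Phi'(0^+) = m_k|\eta| - \sum_{i \in I_j,\,i\neq k} T_{ij}\, \frac{x_i - v_j}{|x_i - v_j|}\cdot \eta - \lambda \left(\frac{v_{j-1}-v_j}{|v_{j-1}-v_j|} + \frac{v_{j+1}-v_j}{|v_{j+1}-v_j|}\right)\cdot \eta \geq 0.$$
Rearranging and using that $\eta \in \R^d$ is arbitrary (take the supremum of the resulting linear form over unit vectors) gives \eqref{var_cor2}; the endpoint version \eqref{var_end2} follows identically using only the $w$-term in place of the two-edge bracket.

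The only step that warrants separate comment is the upper bound $E_\mu^{\la,p}(\gamma_\varepsilon) \leq \Phi(\varepsilon)$. This is immediate from $v_{j'}(\varepsilon) \in \Gamma_{\gamma_\varepsilon}$ (so $d(x_i, \Gamma_{\gamma_\varepsilon}) \leq |x_i - v_{j'}(\varepsilon)|$ whenever $T_{ij'} > 0$) combined with the marginal identity $\sum_{j'} T_{ij'} = m_i$ from \eqref{otf}. The remainder is bookkeeping: tracking the interior/endpoint and smooth/non-smooth cases in parallel to extract the four stated conditions, with the main subtle point being the switch from a two-sided to a one-sided derivative when $v_j$ is tied and $p=1$.
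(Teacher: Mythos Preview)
Your proof is correct and follows the same first-variation approach that the paper invokes (the paper merely states that ``the proof of the lemma is analogous to one in \cite{Sl13}'' after noting that one takes the first variation in $v_j$). Your treatment spells out the details cleanly, including the key inequality $E_\mu^{\la,p}(\gamma_\varepsilon)\leq\Phi(\varepsilon)$ and the one-sided derivative in the tied $p=1$ case; nothing is missing.
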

The proof of the lemma is analogous to one in \cite{Sl13}. 

Note that the condition at a corner provides an upper bound on the turning angle in terms of the $p-1$-st moment of the mass that talks to the corner.
These conditions can be used as in \cite{LuSl} to obtain estimates on the curvature (in the sense of a measure) of minimizers $\gamma$ of $E_\mu^{\la,\p}$ for general compactly supported measures $\mu$. 
 In particular adapting the proof of Theorem 5.1 and Lemma 5.2 of \cite{LuSl} implies:
 
 \begin{prop}  \label{reg_p}
 Let $\mu \in \M$ and let $\lambda>0$ and $p \geq 1$. If $\gm:[0,L] \to \R^d$ is an arc-length-parameterized minimizer of $\EE$ then $\gm' \in BV([0,L], \R^d)$ and
\begin{equation} \label{regest1}
 \| \gm' \|_{TV([0,L])} \leq    \frac{p}{\lambda} \,\diam(\supp(\mu))^{p-1} \, \mu(\R^d).   
\end{equation}
\end{prop}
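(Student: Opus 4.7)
The plan is to adapt the strategy of \cite{LuSl} to the parameterized setting, extending it from $\p = 1$ to general $\p \geq 1$: first establish the BV estimate for discrete $\mu$ via Lemma \ref{1st_var}, then pass to arbitrary $\mu \in \M$ by approximation, and finally secure the bound for every (rather than some) minimizer using the continuum Euler--Lagrange equation.

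\emph{Discrete case.} When $\mu = \sum_i m_i \dt_{x_i}$, the minimizer $\gm$ is piecewise linear with vertices $v_1,\ldots,v_m$ from \eqref{vert}. At each interior vertex $v_j$, apply the stationarity identity \eqref{var_cor} (or the inequality \eqref{var_cor2} in the $\p = 1$ tied case, where $m_k$ is absorbed into $\sum_{i\in I_j}T_{ij}$) to obtain
\[
2\la\sin(\th_j/2) \;=\; \la\lf|\fr{v_{j-1}-v_j}{|v_{j-1}-v_j|}+\fr{v_{j+1}-v_j}{|v_{j+1}-v_j|}\r| \;\leq\; \p\sum_{i\in I_j}T_{ij}|x_i-v_j|^{\p-1} \;\leq\; \p\D^{\p-1}\sum_{i\in I_j}T_{ij},
\]
where $\th_j\in[0,\pi]$ is the deviation angle at $v_j$ and the last step uses $\ggm_\gm\sse\conv(\supp\mu)$ from Lemma \ref{exist}. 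Since $\gm'$ is piecewise constant with jumps of size $|\gm'(t_j^+)-\gm'(t_j^-)|=2\sin(\th_j/2)$, summing over $j$ and using $\sum_{i,j}T_{ij}=\mu(\R^d)$ yields \eqref{regest1}.

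\emph{Approximation.} For general $\mu\in\M$, choose discrete $\mu_n$ with $\supp(\mu_n)\sse\supp(\mu)$, $\mu_n(\R^d)=\mu(\R^d)$ and $\mu_n\rhu\mu$, and let $\gm_n$ be arc-length minimizers of $E_{\mu_n}^{\la,\p}$. Lemma \ref{comp} provides uniform bounds on their lengths and containing sets, while the discrete step gives $\|\gm_n'\|_{TV}\leq\fr{\p}{\la}\D^{\p-1}\mu(\R^d)$ uniformly in $n$. Extending each $\gm_n$ to a common interval via \eqref{curv_ext} and invoking Arzel\`a--Ascoli yields a uniform limit $\~\gm$; joint continuity of $F_\mu^\p(\ggm_\cdot)$ in the measure--curve pair together with lower semi-continuity of length show $\~\gm$ minimizes $\EE$, and lower semi-continuity of total variation transfers the bound to $\~\gm$.

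\emph{Arbitrary minimizer.} To cover every arc-length-parameterized minimizer $\gm$ rather than only $\~\gm$, I would derive the Euler--Lagrange equation directly on $\gm$. For $\phi\in C_c^\infty((0,L),\R^d)$ and a measurable selection $\tau(x)\in\argmin_t|x-\gm(t)|$, the first variation of $\EE(\gm+\vep\phi)$ at $\vep=0$ gives
\[
\la\int_0^L\gm'(t)\cd\phi'(t)\,dt \;=\; -\p\int_{\R^d}(x-\gm(\tau(x)))|x-\gm(\tau(x))|^{\p-2}\cd\phi(\tau(x))\,d\mu(x),
\]
which identifies $\la\gm''$ distributionally with a vector-valued Radon measure of total mass at most $\p\D^{\p-1}\mu(\R^d)$, yielding \eqref{regest1}. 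The principal obstacle is the envelope-theorem step in the continuum: when several points of $\supp\mu$ share the same closest point on $\ggm_\gm$, the map $\vep\mapsto F_\mu^\p(\ggm_{\gm+\vep\phi})$ is only semiconcave and only one-sided derivatives exist; testing against both $\phi$ and $-\phi$ sandwiches $\gm''$ between two extremal force measures, both satisfying the same TV bound, which preserves the estimate.
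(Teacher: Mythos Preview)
Your proof is correct and follows the same discrete-approximation strategy the paper invokes from \cite{LuSl}: establish \eqref{regest1} for discrete $\mu$ by summing the vertex conditions of Lemma \ref{1st_var}, then approximate a general $\mu$ weakly-$*$ by discrete measures and pass the bound to a limit minimizer via lower semicontinuity of total variation.

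Your third step is a worthwhile addition. The approximation argument, as you note, only produces \emph{some} minimizer $\~\gm$ obeying \eqref{regest1}, whereas the proposition is stated for an arbitrary one. Your remedy---testing the minimality of a given $\gm$ against $\gm+\vep\phi$ and $\gm-\vep\phi$ for $\phi\in C_c^\infty((0,L),\R^d)$, using semiconcavity of $\vep\mapsto F_\mu^\p(\ggm_{\gm+\vep\phi})$ to control the one-sided derivatives---yields
\[
\lf|\la\int_0^L\gm'\cdot\phi'\r|\;\leq\; \p\,\big(\!\D\big)^{\p-1}\,\mu(\R^d)\,\|\phi\|_\infty,
\]
which directly identifies $\gm''$ with a vector measure of the required total mass. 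This is sound, and in fact renders the discrete step and the approximation superfluous: the first-variation inequality alone proves the proposition for every minimizer. The paper's route via \cite{LuSl} has the advantage of simultaneously yielding the finer, localized and one-sided information (cf.\ \eqref{regest2} and Lemma \ref{reg_U}) needed later, for which the discrete picture is more transparent; your direct variational argument is the shorter path if only the global BV bound is required.
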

A particular consequence of this estimate is that for all $T \in [0,L)$, $\lim_{t \to T+} \gm'(t)$ exists.
It is straightforward to prove that thus $\gm$ has a right derivative $\gm'(t+)$ for all $t \in [0,L)$. Analogous statements hold for the left derivative. 

We note that the estimate holds if we consider the same problem on the class of curves with fixed endpoints:
$\gamma : [0,L] \to \R^d$ with $\gamma(0)=P$ and $\gamma(1)=Q$ with $P,Q \in \conv(\mu)$. The proof is essentially the same.

A consequence of this observation is that we can formulate a localized version of the estimate. In particular let $\gm$ be the minimizer of $\EE$ as in the Proposition \ref{reg_p}.  Let $\pi$ and $\sigma$ be as defined above Lemma \ref{exist}.
For any interval  $I=(t,t+\delta) \subset [0,L]$
\begin{equation} \label{regest2}
 \| \gm' \|_{TV(I)} \leq   p \,\diam(\supp(\mu))^{p-1}  \frac{1}{\lambda} \,  \sigma(\gm(I)).   
\end{equation}
The estimate bounds how much can the curve $\gm$ turn within interval $I$ based on the $p-1$-st moment of the mass in $\mu$ that projects onto the set $\gm(I)$.
Let $\mu_I$ be the measure defined as $\mu_I(U) = \pi(U \times \gm(I))$, that is the $\mu$ measure of the set of points that projects onto $\gm(I)$. 
The estimate follows from Proposition \ref{reg_p} using the observation that $\gm|_{I}$ is a minimizer of $E_{\mu_I}^{\la, \p}$ among curves which start at $\gm(t)$ and end at $\gm(t+\delta)$.

 In this work we need finer information. We focus on dimension $d=2$. We need information not only on how much a curve turns but also on about the direction it turns in. 
 \begin{lem} \label{reg_U}
Consider dimension  $d=2$. Let $\mu \in \M$,  $\lambda >0$ and $p \geq 1$. Let $\gamma:[0,L] \to \R^2$ be an arc-length-parameterized minimizer of $E_\mu^{\la,\p}$. Let $ t \in [0,L)$. 
 By rotation and translation we can assume that $\gm(t)=0$, $\gamma'(t+)=e_1$. Let $I = (t,t+\delta)$ be such that $t+\delta < L$, $\| \gm' \|_{TV(I)} < \frac{1}{2}$. We define the region underneath the curve segment $\gm(I)$ to be as depicted on Figure \ref{fig:U}. That is let $\ell_1^- = \{ s (0,1) \::\: s \leq 0 \}$ and
 $\ell_2^- = \{ \gm(t+\delta) + s (\gm'((t+\delta)-))^\perp \::\: s \leq 0 \}$. Let $U$ be the connected component of 
 $\R^2 \backslash (\ell_1^- \cup \ell_2^- \cup \gm(I))$ which contains the point $\gm(t+ \frac{\delta}{2}) - \frac{\delta}{4} (0,1)$. 
 
 Let $D$ be the maximal distance of a point in $\supp(\mu) \cap U$, which talks to $\gm(I)$. That is let   $D = \sup\{ d(x,\gm(I)) \::\: x \in \supp(\mu) \cap U, \argmin_{z \in \Gamma_\gm} d(x,z) \cap \gm(I) \neq \emptyset\}$.
 Then 
 \begin{equation} 
  \sup_{s \in I} \gm'(s) \cdot e_2 \leq   \frac{p}{\la}  \,D^{p-1} \mu(U).
 \end{equation}
 \end{lem}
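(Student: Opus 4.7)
The plan is to adapt the first-variation-based argument behind Proposition \ref{reg_p} and its localized version \eqref{regest2}, but tracking the \emph{signed} (directional) component of the turning of $\gm'$ rather than its total variation. I would first prove the estimate for discrete $\mu$ via Lemma \ref{1st_var}, and then pass to general $\mu\in\M$ by approximating weakly-$*$ by discrete measures supported in $\supp(\mu)$; the corresponding arc-length-parameterized minimizers (extended as in \eqref{curv_ext}) converge in $\C$ along a subsequence, and the regions $U$ and constants $D$ behave continuously enough to transfer the estimate.

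So assume $\mu=\sum_{i=1}^n m_i\delta_{x_i}$ is discrete. Write $v_1,\dots,v_m$ for the vertices and $\tau_k^\pm$ for the outgoing/incoming unit tangents at $v_k$; recall that $\gm'$ is constant on each open edge. Let $v_{j_0+1},\dots,v_{j_1-1}$ be the vertices strictly interior to $\gm(I)$. Since $\gm'(t+)\cdot e_2=0$,
\[
\sup_{s\in I}\gm'(s)\cdot e_2 \;\leq\; \sum_{k=j_0+1}^{j_1-1}\bigl[(\tau_k^+-\tau_k^-)\cdot e_2\bigr]_+.
\]
Projecting the stationarity condition \eqref{var_cor} at each interior vertex $v_k$ onto $e_2$ gives
\[
(\tau_k^+-\tau_k^-)\cdot e_2 \;=\; -\frac{p}{\la}\sum_{i\in I_k}T_{ik}\,(x_i-v_k)\cdot e_2\,|x_i-v_k|^{p-2},
\]
so the positive part is controlled only by those $x_i$ with $(x_i-v_k)\cdot e_2<0$ (i.e., strictly below $v_k$), yielding
\[
\bigl[(\tau_k^+-\tau_k^-)\cdot e_2\bigr]_+ \;\leq\; \frac{p}{\la}\sum_{\substack{i\in I_k\\ (x_i-v_k)\cdot e_2<0}}T_{ik}\,|x_i-v_k|^{p-1}.
\]

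The geometric key is that the hypothesis $\|\gm'\|_{TV(I)}<\tfrac{1}{2}$ forces $\gm(I)$ to be nearly horizontal, so the perpendicular wedge at each interior vertex $v_k$ is nearly vertical with narrow angular opening; consequently any $x_i$ that projects onto $v_k$ and satisfies $(x_i-v_k)\cdot e_2<0$ lies in $U$. Indeed the straight segment $[v_k,x_i]$ realizes $d(x_i,\Gamma_\gm)$ so its interior avoids $\Gamma_\gm$, and the small-TV bound rules out that it crosses $\ell_1^-$ or $\ell_2^-$ before reaching $x_i$; then $x_i$ can be connected to the base-point $\gm(t+\delta/2)-(\delta/4)(0,1)$ by a path remaining strictly below $\gm(I)$ and between $\ell_1^-$ and $\ell_2^-$. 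Combined with $|x_i-v_k|\leq D$ and $\sum_k T_{ik}\leq m_i$,
\[
\sum_{k=j_0+1}^{j_1-1}\bigl[(\tau_k^+-\tau_k^-)\cdot e_2\bigr]_+ \;\leq\; \frac{p}{\la}\,D^{p-1}\sum_{i:\,x_i\in U}m_i \;=\; \frac{p}{\la}\,D^{p-1}\,\mu(U),
\]
which is the desired estimate in the discrete case; passing to the limit in the approximation completes the proof.

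The main obstacle is the geometric claim just highlighted: showing that any $x_i$ with $(x_i-v_k)\cdot e_2<0$ that projects onto an interior vertex $v_k$ of $\gm(I)$ must actually lie in $U$. The delicate point is that \emph{a priori} $x_i$ may be far from the curve, so one has to exploit the TV bound to rule out $x_i$ sitting on the wrong side of $\ell_1^-$ or $\ell_2^-$; otherwise there would have to be a point of $\Gamma_\gm$ closer to $x_i$ than $v_k$, contradicting the defining property of the projection. Once this step is in hand, the rest of the argument is a direct application of Lemma \ref{1st_var} edge-by-edge combined with a standard weak-$*$ approximation.
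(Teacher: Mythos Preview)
Your approach is essentially the same as the paper's: reduce to discrete $\mu$ via approximation, project the first-variation identity of Lemma~\ref{1st_var} onto $e_2$, use the small-TV hypothesis to locate the contributing data points in $U$, and pass to the limit. The paper's execution differs only cosmetically: rather than bounding $\sup_s\gm'(s)\cdot e_2$ by $\sum_k[(\tau_k^+-\tau_k^-)\cdot e_2]_+$, it telescopes directly over $v_j\in\gm((t,s))$ for each fixed $s$, and it phrases the geometric step in contrapositive form (defining the region $A$ above $\gm(I)$ and showing that $x_i\in A$ with $i\in I_j$ forces $(x_i-v_j)\cdot e_2>0$ via the normal-cone angle bound), which is a slightly cleaner route to the same conclusion as your connectivity argument.
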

 \begin{figure}[h!]
\includegraphics[scale=0.667]{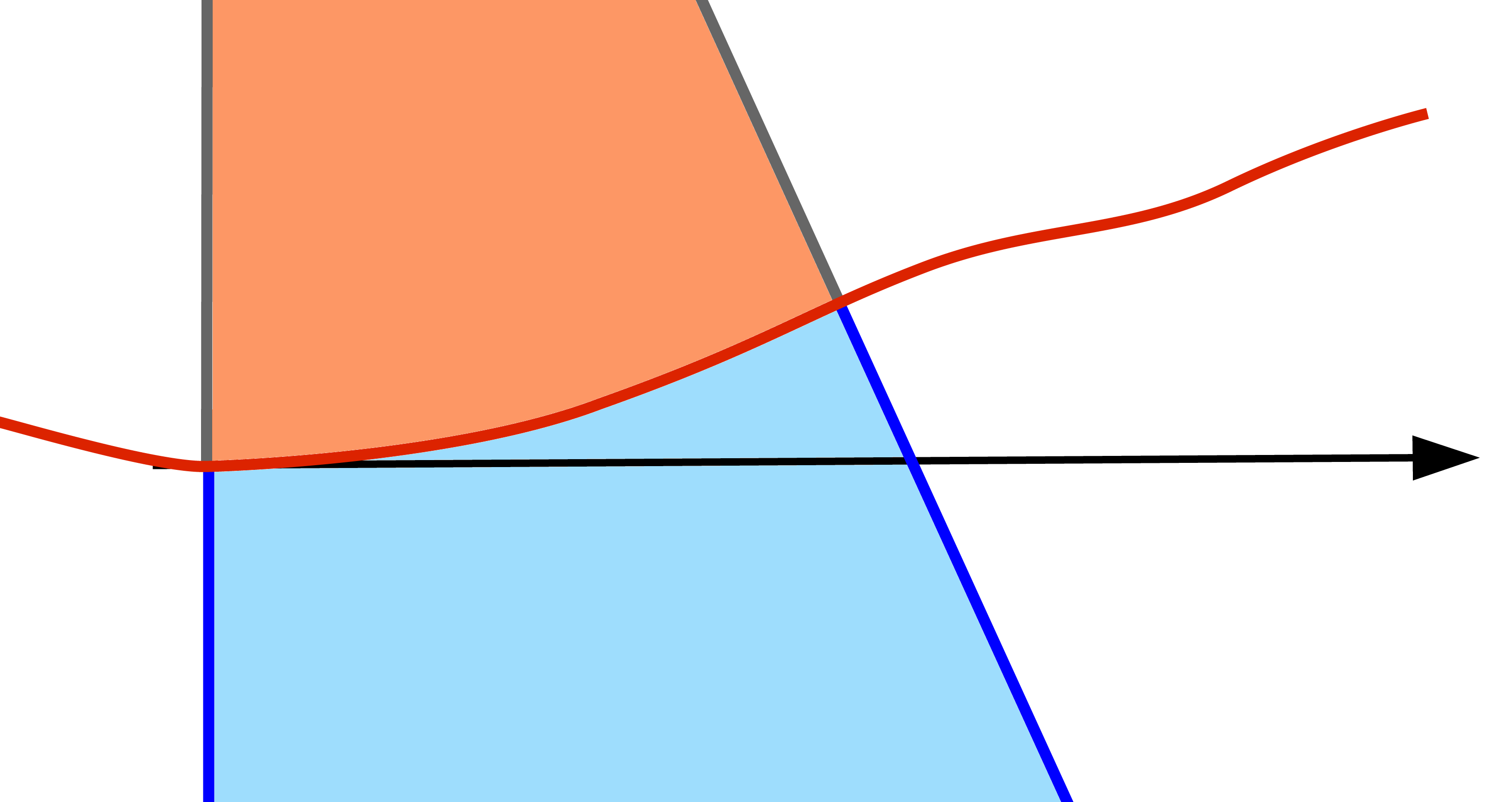}
\put(-125,18){\Large $U$}
\put(-140,73){\Large $A$}
\put(-63,18){$\ell_2^-$}
\put(-182,18){$\ell_1^-$}
\put(-92,82){$\ell_2^+$}
\put(-182,72){$\ell_1^+$}
\put(-40,70){\Large $\gm$}
\caption{The geometry of the configuration near the last double point.}
\label{fig:U}
\end{figure}
 \begin{proof}
 By approximating as in the proof of Theorem 5.1 in \cite{LuSl} the problem can be reduced to considering discrete measures. Thus we assume that $\mu = \sum_{i=1}^n m_i \delta_{x_i}$.
  
 Let $A$ be the region above the curve segment $\gm(I)$. That is let $\ell_1^+ = \{ s (0,1) \::\: s \geq 0 \}$ and
 $\ell_2^+ = \{ \gm(t+\delta) + s (\gm')^\perp(t+\delta-) \::\: s \geq 0 \}$ and let $A$ be the connected component of 
 $\R^2 \backslash (\ell_1^+ \cup \ell_2^+ \cup \gm(I))$ which contains the point $\gm(t+ \frac{\delta}{2}) + \frac{\delta}{4} (0,1)$. 
Note that all of the mass of $\mu$ that talks to $\gm(I)$ is contained in $U \cup A \cup \gamma(I)$. 
 
Due to an assumption on $I$, for all $v_j \in \gamma(I)$ the angle between $v_{j+1} - v_j$ and $e_1$ is less than $\pi/4$ and so is the angle $v_{j} - v_{j-1}$ and $e_1$. Therefore if $i \in I_j$ and $x_i \in A$ then the directed angle between $e_1$ and $x_i - v_j$ is between $\pi/4$ and $3 \pi/4$. Therefore $( v_j - x_i) \cdot e_2 < 0$.

Let us first consider the case that $p>1$. Then from \eqref{var_cor} follows that for $j$ such that $v_j \in \gm(I)$
\begin{align*}
  \sum_{i \in I_j, x_i \in U} p \, T_{ij} \, |x_i - v_j|^{p-2}   ( v_j - x_i) \cdot e_2 & \geq \sum_{i \in I_j} p \, T_{ij} \, |x_i - v_j|^{p-2}   ( v_j - x_i) \cdot e_2 \\
  & = \lambda \, \left(\frac{v_{j+1}-v_j}{|v_{j+1}-v_j|}  - \frac{v_{j}-v_{j-1}}{|v_{j}-v_{j-1}|}  \right) \cdot e_2 
\end{align*}
Consider $s \in (t, t+\delta)$.
Summing up over all  $j$ such that $v_j \in \gm((t,s))$ gives
\[ p \mu( U) D^{p-1}  \geq \lambda \gm'(s -) \cdot e_2, \]
which establishes the desired claim.

Consider now $p=1$.
From \eqref{var_cor2} follows that for $j$ such that $v_j \in \gm(I)$
\begin{align*}
  \sum_{i \in I_j, x_i \in U}  T_{ij} \, \frac{1}{|x_i - v_j|}   ( v_j - x_i) \cdot e_2 & \geq \sum_{i \in I_j} T_{ij} \, \frac{1}{|x_i - v_j|}   ( v_j - x_i)  \cdot e_2 \\
  & \geq \lambda \, \left(\frac{v_{j+1}-v_j}{|v_{j+1}-v_j|}  - \frac{v_{j}-v_{j-1}}{|v_{j}-v_{j-1}|}  \right) \cdot e_2 -\mu(v_j)
\end{align*}
 Summing over $j$ such that $v_j \in \gamma((t,s))$ again provides the desired claim.
 \end{proof}

 \bigskip
\noindent {\bf Acknowledgments.}
Both authors are thankful to FCT (grant UTA CMU/MAT/0007/2009).
XYL acknowledges the support by ICTI.
DS is grateful to  NSF (grant DMS-1211760) for its support. 
The authors would like to thank the Center for Nonlinear Analysis of the Carnegie Mellon University for its support.

\end{document}